\tikzset{snake it/.style={decorate, decoration=snake}}
\newcommand*{\rom}[1]{\expandafter\@slowromancap\romannumeral #1@}
\numberwithin{equation}{section}
\theoremstyle{plain}
\newtheorem{theorem}{Theorem}
\numberwithin{theorem}{section}
\theoremstyle{definition}
\newtheorem{definition}[theorem]{Definition}
\theoremstyle{remark}
\newtheorem{remark}[theorem]{Remark}
\theoremstyle{remark}
\theoremstyle{remark}
\newcommand{\smo}{\setminus \mathbf{0}}
\newcommand{\norm}[1]{\left\lVert#1\right\rVert}      
\newcommand{\abs}[1]{\left|#1\right|}                 
\newcommand{\paren}[1]{\left(#1\right)}               
\newcommand{\sparen}[1]{\left\{#1\right\}}      
\newcommand{\dd}{\mathrm{d}}  
\newcommand{\Cc}{\mathcal{C}}
\newcommand{\Dc}{\mathcal{D}}
\newcommand{\Ec}{\mathcal{E}}
\newcommand{\Fc}{\mathcal{F}}
\newcommand{\Rc}{\mathcal{R}}
\newcommand{\Sc}{\mathcal{S}}
\newcommand{\WF}{\mathrm{WF}}                         
\newcommand{\wf}{\mathrm{WF}}                         
\newcommand{\vb}{\mathbf{b}}
\newcommand{\vr}{\mathbf{r}}
\newcommand{\partyf}[2]{\frac{\partial #2}{\partial y_{#1}}}
\newcommand{\vu}{{\mathbf{u}}}
\newcommand{\vv}{{\mathbf{v}}}
\newcommand{\ve}{{\mathbf{e}}}
\newcommand{\bpm}{\begin{pmatrix}}
\newcommand{\epm}{\end{pmatrix}}
\newcommand{\vx}{{\mathbf{x}}}
\newcommand{\vy}{{\mathbf{y}}}
\newcommand{\vs}{\mathbf{s}}
\newcommand{\vd}{\mathbf{d}}
\newcommand{\vxi}{{\boldsymbol{\xi}}}
\newcommand{\vxio}{{\boldsymbol{\xi}_0}}
\newcommand{\vsig}{{\boldsymbol{\sigma}}} 
\newcommand{\rr}{{{\mathbb R}}}
\newcommand{\rn}{{{\mathbb R}^n}}
\newcommand{\drn}{{\dot{{\mathbb R}^n}}}
\newcommand{\st}{\hskip 0.3mm : \hskip 0.3mm}
\newcommand{\be}{\begin{equation}}
\newcommand{\ee}{\end{equation}}
\newcommand{\bea}{\begin{eqnarray}}
\newcommand{\eea}{\end{eqnarray}}
\newcommand{\bean}{\begin{eqnarray*}}
\newcommand{\eean}{\end{eqnarray*}}
\newcommand{\bel}[1]{\begin{equation}\label{#1}}
\newcommand{\eel}[1]{{\label{#1}\end{equation}}}
\DeclareMathOperator{\sinc}{sinc}
\newcommand\irregularcircle[2]{
  \pgfextra {\pgfmathsetmacro\len{(#1)+rand*(#2)}}
  +(0:\len pt)
  \foreach \a in {10,20,...,350}{
    \pgfextra {\pgfmathsetmacro\len{(#1)+rand*(#2)}}
    -- +(\a:\len pt)
  } -- cycle
}
\title[short]{On a cylindrical scanning modality in three-dimensional Compton scatter tomography\\{\footnotesize\ddmmyyyydate\today~\currenttime}}
\author{James W. Webber\textsuperscript{$\dagger$}}
\address[James W. Webber (corresponding author) 
]{Department of Obstetrics and Gynecology, Brigham and Women's Hospital, 221 Longwood Ave, Boston, MA 02115}
\email[A1,A2]{jwebber5@bwh.harvard.edu\textsuperscript{$\dagger$} 
}
\providecommand{\keywords}[1]
{
  \small	
  \textbf{\textit{Keywords---}} #1
}
\begin{document}
\maketitle
\begin{abstract}
We present injectivity and microlocal analyses of a new generalized Radon transform, $\mathcal{R}$, which has applications to a novel scanner design in three-dimensional Compton Scattering Tomography (CST), which we also introduce here. Using Fourier decomposition and Volterra equation theory, we prove that $\mathcal{R}$ is injective and show that the image solution is unique. Using microlocal analysis, we prove that $\mathcal{R}$ satisfies the Bolker condition (sometimes called the ``Bolker assumption" \cite{ABKQ2013}), and we investigate the edge detection capabilities of $\mathcal{R}$. This has important implications regarding the stability of inversion and the amplification of measurement noise. In addition, we present simulated 3-D image reconstructions from $\mathcal{R}f$ data, where $f$ is a 3-D density, with varying levels of added Gaussian noise.
 This paper provides the theoretical groundwork for 3-D CST using the proposed scanner design.
\end{abstract}
\keywords{{\it{\textbf{Keywords}}}} - cylindrical scanner, Compton tomography, generalized Radon transforms, injectivity, microlocal analysis


\section{introduction} 
Compton scatter tomography is an imaging technique which uses Compton scattered photons to recover an electron density, which has applications in security screening, medical and cultural heritage imaging \cite{redler2018compton,rigaud20183d,cebeiro2017new,guerrero2017three,webber2020compton}.
The literature presents a number of scanning modalities in two and three-dimensional CST \cite{rigaud20183d,webber2020compton,NT,me, pal, norton, me2, truong2019compton, RigaudComptonSIIMS2017, rigaud2021reconstruction, cebeiro2021three, tarpau2020analytic, godeke2022imaging, kuger2022multiple, rigaud20213d}.

In \cite[figure 5]{rigaud20183d}, the authors present a number of possible scanning modalities in 3-D CST, and derive contour reconstruction techniques using microlocal analysis and filtered backprojection ideas. The simulations focus on geometry (b) in figure 5, whereby a set of detectors are placed on a sphere and a density contained within the sphere is imaged by a single, fixed source which is located on the sphere surface. Simulated reconstructions of image phantoms are presented and Total Variation (TV) regularization is employed to help combat noise.

In \cite{cebeiro2021three}, the authors propose a new scanning geometry for 3-D CST. The geometry consists of a single source located at the origin and a set of detectors on a sphere centered at the origin, radius $R$. The difference with the geometry of \cite{rigaud20183d}, which also has a spherical array of detectors, is that in \cite{cebeiro2021three} the source is located at the center of the sphere, not on the sphere surface as in \cite{rigaud20183d}. In \cite{cebeiro2021three}, the density is also placed outside of the sphere, whereas in \cite{rigaud20183d}, the density is supported on the sphere interior. The authors in \cite{cebeiro2021three} introduce a new Radon transform, $\mathcal{R}_{\mathcal{T}}$, motivated by CST and their geometry, which integrates a compactly supported function, $f$, over apple surfaces. An apple is the exterior part of a spindle torus (see \cite[figure 4]{rigaud20183d}), which is a special type of torus that self-intersects. Using the theory of \cite{schiefeneder2017radon} on generalized Abel equations, $\mathcal{R}_{\mathcal{T}}$ is shown to be injective. Simulated reconstructions using $\mathcal{R}_{\mathcal{T}}f$ data are presented using an analytic-algebraic hybrid method.

In \cite{rigaud2021reconstruction}, the authors consider the spherical and cylindrical scanning systems first proposed in \cite[figure 5]{rigaud20183d}. In particular, the authors focus on the problem of incomplete data, which occurs when there are parts of the image edge map which are undetectable. The data incompleteness was most apparent using the cylindrical scanner, given the nature of the geometry. The authors present a modified, multiplicative Kaczmarz algorithm to address incomplete data and they test their algorithm on simulated phantoms.

In \cite{rigaud20213d}, the authors present a new model for multiple scattering in CST. The mathematical model for first and second order Compton scatter is shown to be a Fourier Integral Operator (FIO). In particular, the second scatter model is shown to be an FIO of higher order than the first scatter model. This means that the second order scatter data is more smooth than the the first order scatter. Using this idea, the authors derive contour reconstruction methods based on filtered backprojection, and validate their theory using Monte Carlo simulation.

In \cite{me2}, the authors present a 3-D CST scanning modality. The geometry consists of a single source and detector pair which are rotated opposite one another on the surface of the unit sphere. The scanned object is placed inside the sphere. The authors introduce a new Radon transform, $\mathcal{S}$, which integrates a smooth function of compact support over spindle surfaces, which are sometimes called ``lemons" elsewhere in the literature \cite{rigaud20183d}. A ``lemon" or ``spindle" is the surface of rotation of a circular arc. Equivalently, a lemon is the interior part of a spindle torus (see \cite[figure 4]{rigaud20183d}). Using spherical harmonic expansion ideas, and classical theory on Volterra equations, the authors show that $\mathcal{S}$ is injective when $f$ is supported within the upper unit hemisphere, and they derive an inversion method based on Neumann series. Reconstructions of simulated phantoms are presented using Tikhonov regularization.

\begin{figure}[!h]
\centering
\begin{tikzpicture}[scale=0.6]
\draw[fill=green,rounded corners=1mm] (0-1,0-1) \irregularcircle{1.9cm}{1mm};
\draw [->,line width=1pt] (-5,0)--(5,0)node[right] {$x$};
\draw [->,line width=1pt] (0,-5)--(0,5)node[right] {$y$};
\node at (-0.9-1,1.2) {$f$};
\draw [red] (0,0) circle (4.5);
\draw (4.5,0) circle (2);
\coordinate (x) at (3,1.3229);
\coordinate (O) at (0,0);
\coordinate (c) at (4.5,0);
\node at (6,2) {$\mathcal{L}$};
\draw [fill=black] (-3,-4.1+1) rectangle (3,-3.9+1);
\draw [->] (4.5,-4.1)node[right]{conveyor}--(3,-3.1);
\end{tikzpicture}
\begin{tikzpicture}[scale=0.6]
\draw[fill=green,rounded corners=1mm] (0-1,2.6458) \irregularcircle{1.5cm}{1mm};
\draw [<-,line width=1pt] (-5,-1.5)node[left] {$-x$}--(4.5,-1.5);
\draw [<->] (-0.2,-1.5)--(-0.2,0);
\node at (-0.55,-0.75) {$z_0$};
\draw [->,line width=1pt] (0,-5)--(0,5)node[right] {$z$};
\node at (-0.9-1,2.6458+2.1-0.4) {$f$};
\coordinate (x) at (3,2.6458);
\coordinate (O) at (0,0);
\coordinate (c) at (4.5,0);
\node at (-2.25,-0.3-1.5) {1};
\draw [->] (-2,-1.8)--(0,-1.8);
\draw [->] (-2.5,-1.8)--(-4.5,-1.8);
\draw [blue] (4.5,-5)--(4.5,0);
\draw [red] (4.5,0)--(4.5,5);
\draw [blue] (-4.5,-5)--(-4.5,0);
\draw [red] (-4.5,0)--(-4.5,5);
\draw [dashed] (-4.5,2.6458)--(4.5,2.6458);

\draw [thick,domain=120:240] plot ({6.8+4.6*cos(\x)}, {4.6*sin(\x)});
\draw [thick,domain=-60:60] plot ({-6.8+2*4.5+4.6*cos(\x)}, {4.6*sin(\x)});
\node at (4.75,-4.25) {$\vs$};
\node at (4.8,4.3) {$\vd$};
\node at (6.5,2.8) {$\mathcal{L}$};

\draw [snake it][->] (4.5,-4)--(2.2997,0.9526);
\draw [snake it][->] (2.2997,0.9526)--(4.5,4);
\draw [->] (2.2997,0.9526)--(1.8125,2.0493);
\coordinate (D) at (4.5,4);
\coordinate (w) at (2.2997,0.9526);
\coordinate (a) at (1.8125,2.0493);
\draw pic[draw=orange, <->,"$\omega$", angle eccentricity=1.5] {angle = D--w--a};
\node at (4,-2) {$E$};
\node at (3.68,2) {$E'$};
\end{tikzpicture}

\caption{$(x,y)$ and $(x,z)$ plane cross sections of the proposed cylindrical scanning geometry. The cylinder has unit radius. The $(x,y)$ plane of the left-hand figure is highlighted as a dashed line in the $(x,z)$ plane. Cross-sections of a lemon of integration, $\mathcal{L}$, are labeled. The sources ($\vs$) are located on the bottom half of the cylinder, highlighted in blue, and the detectors ($\vd$), highlighted in red, are located on the upper half. The scanning target, $f$, displayed as a green, irregular disc, passes through the cylinder on a conveyor belt in the $z$ direction. The variable $z_0 \in \mathbb{R}$ is the $z$ component of the center of $\mathcal{L}$.}
\label{fig2}
\end{figure}
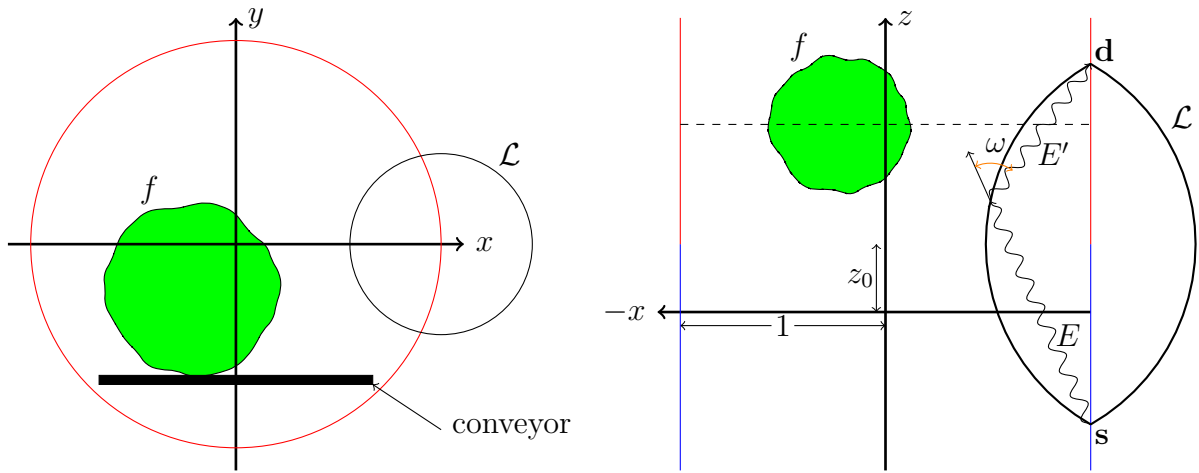

In this paper, we introduce a new scanning modality in 3-D CST, whereby monochromatic (e.g., gamma ray) sources and energy-sensitive detectors on a cylindrical surface scan a density passing through the cylinder on a conveyor. See figure \ref{fig2}, where we have illustrated $(x,y)$ and $(x,z)$ plane cross-sections of the proposed scanner geometry. The incoming photons, which are emitted from $\vs$ with energy $E$, Compton scatter from charged particles (usually electrons) with energy $E'$, and are measured by the detector $\vd$; meanwhile, the electron charge density, $f$ (represented by a real-valued function), passes through the cylinder in the $z$ direction on a conveyor belt. The scattered energy, $E'$, is given by the equation
\begin{equation}
\label{equ1}
E'=\frac{E}{1+(E/E_0)(1-\cos\omega)},
\end{equation}
where $E$ is 
the initial energy, $\omega$ is the scattering angle and $E_0\approx 511\text{keV}$ denotes
the electron rest energy. If the source is monochromatic (i.e., $E$ is
fixed) and we can measure the scattered energy, $E'$, i.e., the detectors are energy-sensitive, then the
scattering angle, $\omega$, of the interaction is fixed and determined by equation
\eqref{equ1}. This implies that the surface of Compton scatterers is the surface of rotation of a circular arc, which we denote as a lemon. An example 2-D cross section of a lemon, $\mathcal{L}$, is shown in figure \ref{fig2}. The axis of rotation of $\mathcal{L}$ is parallel to the $z$ axis and embedded within the cylinder surface. We model the Compton scattered intensity as integrals of $f$
over lemons. See, e.g., \cite{rigaud20183d} for other work which models the Compton intensity in this way. The proposed geometry illustrated in figure \ref{fig2} has similarities to that of \cite[figure 5, subfigure (c)]{rigaud20183d}, although we consider multiple sources on a cylindrical surface. In \cite[figure 5, subfigure (c)]{rigaud20183d} only one, fixed source is used for imaging.

The geometry and physical modeling leads us to a new Radon transform, $\mathcal{R}$, which integrates $f$ over lemon surfaces. Using Fourier decomposition and Volterra equation theory \cite{tricomi1985integral}, we prove that $\mathcal{R}$ is injective, which implies $f$ can be uniquely recovered using Compton scatter data. The proof uses similar ideas to \cite[Theorem 4.3]{webber2022ellipsoidal}, where the authors prove injectivity for a spheroid Radon transform. Using the theory of linear FIO, we prove that $\mathcal{R}$ satisfies the Bolker condition \cite{ABKQ2013}, which gives insight into the reconstruction artifacts. Using microlocal analysis, we investigate the edge detection capabilities of $\mathcal{R}$ and discuss how this relates to image edge reconstruction. In addition, we present simulated 3-D image reconstructions from $\mathcal{R}f$ data with varying levels of added Gaussian noise. The results presented here provide a novel framework for CST, and lay the theoretical foundation for 3-D density reconstruction using the proposed scanner design.

The remainder of this paper is organized as follows. In section \ref{sect:defns}, we review some definitions from microlocal analysis that will be used in our theorems. In section \ref{section_spindle}, we introduce a new lemon Radon transform, $\mathcal{R}$, and prove injectivity. In section \ref{microlocal_section}, we analyze the stability of $\mathcal{R}$ using microlocal analysis. We show that $\mathcal{R}$ satisfies the Bolker condition and we investigate the edge detection capabilities of $\mathcal{R}$. In section \ref{results}, we validate our microlocal theory, and present image reconstructions of simulated phantoms from $\mathcal{R}f$ data with varying levels of added Gaussian noise.

\section{Definitions from microlocal analysis}\label{sect:defns} 

In this section, we review some theory from microlocal analysis which will be used in our theorems. We first provide some
notation and definitions.  Let $X$ and $Y$ be open subsets of
{$\mathbb{R}^{n_X}$ and $\mathbb{R}^{n_Y}$, respectively.}  Let $\Dc(X)$ be the space of smooth functions compactly
supported on $X$ with the standard topology and let $\mathcal{D}'(X)$
denote its dual space, the vector space of distributions on $X$.  Let
$\Ec(X)$ be the space of all smooth functions on $X$ with the standard
topology and let $\mathcal{E}'(X)$ denote its dual space, the vector
space of distributions with compact support contained in $X$. Finally,
let $\Sc(\rn)$ be the space of Schwartz functions, that are rapidly
decreasing at $\infty$ along with all derivatives. See \cite{Rudin:FA}
for more information. 

We now list some notation conventions that will be used throughout this paper:
\begin{enumerate}
\item For a function $f$ in the Schwartz space $\Sc(\mathbb{R}^{n_X})$ or in
$L^2(\rn)$, we use $\mathcal{F}f$ and $\mathcal{F}^{-1}f$ to denote
the Fourier transform and inverse Fourier transform of $f$,
respectively (see \cite[Definition 7.1.1]{hormanderI}). $\mathcal{F}f$ and $\mathcal{F}^{-1}f$ are defined in terms of angular frequency.

\item We use the standard multi-index notation: if
$\alpha=(\alpha_1,\alpha_2,\dots,\alpha_n)\in \sparen{0,1,2,\dots}^{n_X}$
is a multi-index and $f$ is a function on $\mathbb{R}^{n_X}$, then
\[\partial^\alpha f=\paren{\frac{\partial}{\partial
x_1}}^{\alpha_1}\paren{\frac{\partial}{\partial
x_2}}^{\alpha_2}\cdots\paren{\frac{\partial}{\partial x_{n_X}}}^{\alpha_{n_X}}
f.\] If $f$ is a function of $(\vy,\vx,\vsig)$ then $\partial^\alpha_\vy
f$ and $\partial^\alpha_\vsig f$ are defined similarly.

\item We identify the cotangent
spaces of Euclidean spaces with the underlying Euclidean spaces. For example, the cotangent space, 
$T^*(X)$, of $X$ is identified with $X\times \mathbb{R}^{n_X}$. If $\Phi$ is a function of $(\vy,\vx,\vsig)\in Y\times X\times \rr^N$,
then we define $\dd_{\vy} \Phi = \paren{\partyf{1}{\Phi},
\partyf{2}{\Phi}, \cdots, \partyf{{n_X}}{\Phi} }$, and $\dd_\vx\Phi$ and $
\dd_{\vsig} \Phi $ are defined similarly. Identifying the cotangent space with the Euclidean space as mentioned above, we let $\dd\Phi =
\paren{\dd_{\vy} \Phi, \dd_{\vx} \Phi,\dd_{\vsig} \Phi}$.

\item For $\Omega\subset \rr^m$, we define $\dot{\Omega}
= \Omega\smo$.

\end{enumerate}

\noindent The singularities of a function and the directions in which they occur
are described by the wavefront set \cite[page
16]{duistermaat1996fourier}, which we define below.
\begin{definition}
\label{WF} Let $X$ be an open subset of $\rn$ and let $f$ be a
distribution in $\mathcal{D}'(X)$.  Let $(\vx_0,\vxi_0)\in X\times
\drn$.  Then $f$ is \emph{smooth at $\vx_0$ in direction $\vxio$} if
there exists a neighborhood $U$ of $\vx_0$ and $V$ of $\vxi_0$ such
that for every $\Phi\in \Dc(U)$ and $N\in\mathbb{R}$ there exists a
constant $C_N$ such that for all $\vxi\in V$,
\begin{equation}
\left|\Fc(\Phi f)(\lambda\vxi)\right|\leq C_N(1+\abs{\lambda})^{-N}.
\end{equation}
The pair $(\vx_0,\vxio)$ is in the \emph{wavefront set,} $\wf(f)$, if
$f$ is not smooth at $\vx_0$ in direction $\vxio$.
\end{definition}
 This definition follows the intuitive idea that the elements of
$\WF(f)$ are the point-normal vector pairs at
which $f$ has singularities.  For example, if $f$ is the
characteristic function on the unit ball, $B_n = \{\vx\in\mathbb{R}^n : |\vx|\leq 1\}$, in $\mathbb{R}^n$, then its
wavefront set is $\WF(f)=\{(\vx,t\vx): \vx\in S^{n-1}, t\neq 0\}$, i.e., the
set of points on $S^{n-1}$ (i.e., the boundary of $B$) paired with the corresponding normal vectors
to $S^{n-1}$.



The wavefront set of a distribution on $X$ is normally defined as a
subset the cotangent bundle $T^*(X)$ so it is invariant under
diffeomorphisms, but we do not need this invariance, so we will
continue to identify $T^*(X) = X \times \rn$ and consider $\WF(f)$ as
a subset of $X\times \drn$.


 \begin{definition}[{\cite[Definition 7.8.1]{hormanderI}}] \label{ellip}We define
 $S^m(Y \times X, \mathbb{R}^N)$ to be the
set of $a\in \Ec(Y\times X\times \mathbb{R}^N)$ such that for every
compact set $K\subset Y\times X$ and all multi--indices $\alpha,
\beta, \gamma$ the bound
\[
\left|\partial^{\gamma}_{\vy}\partial^{\beta}_{\vx}\partial^{\alpha}_{\vsig}a(\vy,\vx,\vsig)\right|\leq
C_{K,\alpha,\beta,\gamma}(1+\norm{\vsig})^{m-|\alpha|},\ \ \ (\vy,\vx)\in K,\
\vsig\in\mathbb{R}^N,
\]
holds for some constant $C_{K,\alpha,\beta,\gamma}>0$. 

 The elements of $S^m$ are called \emph{symbols} of order $m$.  Note
that these symbols are sometimes denoted $S^m_{1,0}$.  The symbol
$a\in S^m(Y \times X,\rr^N)$ is \emph{elliptic} if for each compact set
$K\subset Y\times X$, there is a $C_K>0$ and $M>0$ such that
\bel{def:elliptic} \abs{a(\vy,\vx,\vsig)}\geq C_K(1+\norm{\vsig})^m,\
\ \ (\vy,\vx)\in K,\ \norm{\vsig}\geq M.
\ee 
\end{definition}

\begin{definition}[{\cite[Definition
        21.2.15]{hormanderIII}}] \label{phasedef}
A function $\Phi=\Phi(\vy,\vx,\vsig)\in
\Ec(Y\times X\times\dot{\mathbb{R}^N})$ is a \emph{phase
function} if $\Phi(\vy,\vx,\lambda\vsig)=\lambda\Phi(\vy,\vx,\vsig)$, $\forall
\lambda>0$ and $\mathrm{d}\Phi$ is nowhere zero. The
\emph{critical set of $\Phi$} is
\[\Sigma_\Phi=\{(\vy,\vx,\vsig)\in Y\times X\times\dot{\mathbb{R}^N}
: \dd_{\vsig}\Phi=0\}.\] 
 A phase function is
\emph{clean} if the critical set $\Sigma_\Phi = \{ (\vy,\vx,\vsig) \ : \
\mathrm{d}_\vsig \Phi(\vy,\vx,\vsig) = 0 \}$ is a smooth manifold {with tangent space defined {by} the kernel of $\mathrm{d}\,(\mathrm{d}_\sigma\Phi)$ on $\Sigma_\Phi$. Here, the derivative $\mathrm{d}$ is applied component-wise to the vector-valued function $\mathrm{d}_\sigma\Phi$. So, $\mathrm{d}\,(\mathrm{d}_\sigma\Phi)$ is treated as a Jacobian matrix of dimensions $N\times (2n+N)$.}
\end{definition}
\noindent By the {Constant Rank Theorem} the requirement for a phase
function to be clean is satisfied if
$\mathrm{d}\paren{\mathrm{d}_\vsig
\Phi}$ has constant rank.

\begin{definition}[{\cite[Definition 21.2.15]{hormanderIII} and
      \cite[section 25.2]{hormander}}]\label{def:canon} Let $X$ and
$Y$ be open subsets of $\rn$. Let $\Phi\in \Ec\paren{Y \times X \times
{\rr}^N}$ be a clean phase function.  In addition, we assume that
$\Phi$ is \emph{nondegenerate} in the following sense:
\[\text{$\dd_{\vy}\Phi$ and $\dd_{\vx}\Phi$ are never zero on
$\Sigma_{\Phi}$.}\]
  The
\emph{canonical relation parametrized by $\Phi$} is defined as
\begin{equation}\label{def:Cgenl} \begin{aligned} \Cc=&\sparen{
\paren{\paren{\vy,\dd_{\vy}\Phi(\vy,\vx,\vsig)};\paren{\vx,-\dd_{\vx}\Phi(\vy,\vx,\vsig)}}:(\vy,\vx,\vsig)\in
\Sigma_{\Phi}},
\end{aligned}
\end{equation}
\end{definition}

\begin{definition}\label{FIOdef}
Let $X$ and $Y$ be open subsets of {$\mathbb{R}^{n_X}$ and $\mathbb{R}^{n_Y}$, respectively.} {Let an operator $A :
\Dc(X)\to \mathcal{D}'(Y)$ be defined by the distribution kernel
$K_A\in \mathcal{D}'(Y\times X)$, in the sense that
$Af(\vy)=\int_{X}K_A(\vy,\vx)f(\vx)\mathrm{d}\vx$. Then we call $K_A$
the \emph{Schwartz kernel} of $A$}. A \emph{Fourier
integral operator (FIO)} of order $m + N/2 - (n_X+n_Y)/4$ is an operator
$A:\Dc(X)\to \mathcal{D}'(Y)$ with Schwartz kernel given by an
oscillatory integral of the form
\begin{equation} \label{oscint}
K_A(\vy,\vx)=\int_{\mathbb{R}^N}
e^{i\Phi(\vy,\vx,\vsig)}a(\vy,\vx,\vsig) \mathrm{d}\vsig,
\end{equation}
where $\Phi$ is a clean nondegenerate phase function and $a$ is a
symbol in $S^m(Y \times X , \mathbb{R}^N)$. The \emph{canonical
relation of $A$} is the canonical relation of $\Phi$ defined in
\eqref{def:Cgenl}. $A$ is called an \emph{elliptic} FIO if its symbol is elliptic.

An FIO is called a \emph{pseudodifferential operator} if its canonical relation $\Cc$ is contained in the diagonal, i.e.,
$\Cc \subset \Delta := \{ (\vx,\vxi;\vx,\vxi)\}$.
\end{definition}
\noindent Let $X$ and $Y$ be
sets and let $\Omega_1\subset X$ and $\Omega_2\subset Y\times X$. The composition $\Omega_2\circ \Omega_1$ and transpose $\Omega_2^t$ of $\Omega_2$ are defined
\[\begin{aligned}\Omega_2\circ \Omega_1 &= \sparen{\vy\in Y\st \exists \vx\in \Omega_1,\
(\vy,\vx)\in \Omega_2}\\
\Omega_2^t &= \sparen{(\vx,\vy)\st (\vy,\vx)\in \Omega_2}.\end{aligned}\]
We now state the H\"ormander-Sato Lemma \cite[Theorem 8.2.13]{hormanderI}, which explains the relationship between the
wavefront set of distributions and their images under FIO.

\begin{theorem}[H\"ormander-Sato Lemma]\label{thm:HS} Let $f\in \Ec'(X)$ and
let ${A}:\Ec'(X)\to \Dc'(Y)$ be an FIO with canonical relation $\Cc$.
Then, $\wf({A}f)\subset \Cc\circ \wf(f)$.\end{theorem}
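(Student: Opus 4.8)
The plan is to obtain the statement from the general wavefront-set calculus, since for a kernel of the form \eqref{oscint} Theorem~\ref{thm:HS} is \cite[Theorem 8.2.13]{hormanderI} specialised to an FIO. Writing $\pi_Y:Y\times X\to Y$ and $\pi_X:Y\times X\to X$ for the coordinate projections, the identity
\[
Af \;=\; (\pi_Y)_*\bigl(K_A\cdot\pi_X^*f\bigr),\qquad f\in\Ec'(X),
\]
lets me trace $\WF(Af)$ through three standard operations — pullback by the submersion $\pi_X$, multiplication by $K_A$, and fibrewise integration $(\pi_Y)_*$ — provided I first control $\WF(K_A)$.

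First I would compute the wavefront set of the Schwartz kernel. Since $K_A(\vy,\vx)=\int_{\rr^N}e^{i\Phi(\vy,\vx,\vsig)}a(\vy,\vx,\vsig)\,\dd\vsig$ with $\Phi$ a clean nondegenerate phase function and $a\in S^m(Y\times X,\rr^N)$, the non-stationary phase estimate for oscillatory integrals (\cite[Theorem 8.1.9]{hormanderI}) gives
\[
\WF(K_A)\ \subset\ \Cc'\;:=\;\bigl\{\,(\vy,\vx;\,\dd_{\vy}\Phi,\,\dd_{\vx}\Phi):(\vy,\vx,\vsig)\in\Sigma_{\Phi}\,\bigr\}.
\]
Concretely: localise $K_A$ near $(\vy_0,\vx_0)$ with a cutoff $\chi\in\Dc$ and pick a codirection $(\veta_0,\xi_0)$ that is not a positive multiple of $(\dd_{\vy}\Phi,\dd_{\vx}\Phi)(\vy_0,\vx_0,\vsig)$ for any $\vsig$ with $\dd_{\vsig}\Phi(\vy_0,\vx_0,\vsig)=0$; rescaling $\vsig=\lambda\tau$ and using the homogeneity of $\Phi$, the total phase $(\vy,\vx,\tau)\mapsto\Phi(\vy,\vx,\tau)-\vy\cdot\veta-\vx\cdot\xi$ has nonvanishing gradient over the relevant compact set, so repeated integration by parts in $(\vy,\vx,\tau)$ yields $|\Fc(\chi K_A)(\lambda(\veta,\xi))|=O(\lambda^{-N})$ for every $N$, uniformly for $(\veta,\xi)$ near $(\veta_0,\xi_0)$. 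The structural fact I will reuse is that nondegeneracy of $\Phi$ forces $\dd_{\vy}\Phi\neq0$ and $\dd_{\vx}\Phi\neq0$ on $\Sigma_{\Phi}$, so no element of $\Cc'$ has a zero covector on either side.

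Next I would propagate singularities through the composition (all of this from the standard calculus in \cite[Chapter 8]{hormanderI}). Pullback by the submersion $\pi_X$ gives $\WF(\pi_X^*f)\subset\{(\vy,\vx;\,0,\xi):(\vx,\xi)\in\WF(f)\}$. The product $K_A\cdot\pi_X^*f$ is then a well-defined distribution: an obstruction would require $(\vy,\vx;\veta,\xi)\in\WF(K_A)$ with $(\vy,\vx;-\veta,-\xi)\in\WF(\pi_X^*f)$, hence $\veta=0$, which is impossible by the previous paragraph; moreover
\[
\WF\bigl(K_A\cdot\pi_X^*f\bigr)\ \subset\ \WF(K_A)\,\cup\,\WF(\pi_X^*f)\,\cup\,\bigl(\WF(K_A)+\WF(\pi_X^*f)\bigr).
\]
Since $f$ is compactly supported, $K_A\cdot\pi_X^*f$ has compact support in the $X$-fibres, so fibrewise integration is legitimate and $\WF\bigl((\pi_Y)_*u\bigr)\subset\{(\vy,\veta):(\vy,\vx;\veta,0)\in\WF(u)\text{ for some }\vx\}$. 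Taking $(\vy_0,\veta_0)\in\WF(Af)$, there is $\vx_0$ with $(\vy_0,\vx_0;\veta_0,0)\in\WF(K_A\cdot\pi_X^*f)$; the $\WF(K_A)$-only contribution is excluded because $\Cc'$ has no zero $X$-covector, and the $\WF(\pi_X^*f)$-only contribution is excluded because it would force $\veta_0=0$, so the cross term occurs: some $\xi_0\neq0$ has $(\vy_0,\vx_0;\veta_0,\xi_0)\in\WF(K_A)$ and $(\vy_0,\vx_0;0,-\xi_0)\in\WF(\pi_X^*f)$, the two summing to $(\veta_0,0)$. The first inclusion together with $\WF(K_A)\subset\Cc'$ gives $\veta_0=\dd_{\vy}\Phi(\vy_0,\vx_0,\vsig)$ and $\xi_0=\dd_{\vx}\Phi(\vy_0,\vx_0,\vsig)$ for some $(\vy_0,\vx_0,\vsig)\in\Sigma_{\Phi}$; the second gives $(\vx_0,-\xi_0)\in\WF(f)$. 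Comparing with \eqref{def:Cgenl}, this says precisely $\bigl((\vy_0,\veta_0);(\vx_0,-\xi_0)\bigr)\in\Cc$ with $(\vx_0,-\xi_0)\in\WF(f)$, i.e.\ $(\vy_0,\veta_0)\in\Cc\circ\WF(f)$, which is the claim.

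I expect the main obstacle to be the oscillatory-integral estimate $\WF(K_A)\subset\Cc'$: that is the only step requiring genuine analysis (the non-stationary phase / integration-by-parts argument, with the homogeneity of $\Phi$ and the symbol bounds handling the $\vsig$-integral at infinity). Everything afterwards is formal manipulation of wavefront sets under pullback, products and fibre integration, the one delicate point being that the nondegeneracy of $\Phi$ is exactly what both guarantees that $K_A\cdot\pi_X^*f$ exists and kills the degenerate covector configurations in the final chase. Alternatively, one may simply invoke \cite[Theorem 8.2.13]{hormanderI} for $K=K_A$, observing that nondegeneracy makes $\WF(K_A)_X$ and $\WF(K_A)_Y$ empty and that the twisted relation $\WF'(K_A)$ is contained in $\Cc$.
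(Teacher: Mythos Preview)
Your proposal is correct and is the standard route to the result; it is in fact considerably more than what the paper does. The paper does not supply a proof of Theorem~\ref{thm:HS} at all---it simply quotes the statement and cites \cite[Theorem 8.2.13]{hormanderI}. Your write-up unpacks exactly that citation: the oscillatory-integral bound $\WF(K_A)\subset\Cc'$ followed by the pullback/product/pushforward calculus, with nondegeneracy of $\Phi$ eliminating the degenerate covector cases. So there is no discrepancy to discuss; your final sentence (``one may simply invoke \cite[Theorem 8.2.13]{hormanderI}'') is precisely the paper's entire treatment.
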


Let $A$ be an FIO with adjoint $A^*$. Then if $\Cc$ is the canonical relation of $A$, the canonical relation of $A^*$ is $\Cc^t$. Many imaging techniques are based on application of the adjoint operator $A^*$ and so to understand artifacts we consider $A^* A$ (or, if
$A$ does not map to $\Ec'(Y)$, then $A^* \psi A$ for an
appropriate cutoff $\psi$). Because of Theorem \ref{thm:HS},
\begin{equation}
\label{compo}
\wf(A^* \psi A f) \subset \Cc^t \circ \Cc \circ \wf(f).
\end{equation}
The next two definitions provide tools to analyze the composition in equation \eqref{compo}, which we will apply later in section \ref{microlocal_section}.

\begin{definition}
\label{defproj} Let $\Cc\subset T^*(Y\times X)$ be the canonical
relation associated to the FIO ${A}:\mathcal{E}'(X)\to
\mathcal{D}'(Y)$. We let $\Pi_L$ and $\Pi_R$ denote the natural left-
and right-projections of $\Cc$, projecting onto the appropriate
coordinates: $\Pi_L:\Cc\to T^*(Y)$ and $\Pi_R : \Cc\to T^*(X)$.
\end{definition}

Because $\Phi$ is nondegenerate, the projections do not map to the
zero section.  
%
%
If $A$ satisfies our next definition, then $A^* A$ (or $A^* \psi A$) is a pseudodifferential operator
\cite{GS1977, quinto}.

\begin{definition}\label{def:bolker} Let
${A}:\Ec'(X)\to \Dc'(Y)$ be a FIO with canonical relation $\Cc$ then
{$A$} (or $\Cc$) satisfies the \emph{Bolker Condition} if
the natural projection $\Pi_L:\Cc\to T^*(Y)$ is an embedding
(injective immersion).\end{definition}

\section{A Lemon Radon transform on a cylinder}
\label{section_spindle}
In this section, we introduce a new Radon transform, $\mathcal{R}$, which integrates a square integrable function of compact support over lemon surfaces. A ``lemon" is the surface of rotation of a circular arc. The lemon surfaces we consider have axes of rotation which are embedded in the cylindrical scanning surface of figure \ref{fig2}. In our first main theorem in this section, we prove that $\mathcal{R}$ is injective. 

We use the standard cylindrical coordinate system $(x,y,z)=(r\cos\theta,r\sin\theta,z)$, where $r\in [0,\infty)$, $\theta \in [0,2\pi]$, and $z\in\mathbb{R}$. Refer to figure \ref{fig1}, which illustrates coordinates on the lemon surfaces of integration in the scanning geometry of figure \ref{fig2}.
\begin{figure}[!h]
\centering
\begin{tikzpicture}[scale=0.5]
\draw [->,line width=1pt] (-5,0)--(5,0)node[right] {$x$};
\draw [->,line width=1pt] (0,-5)--(0,5)node[right] {$y$};
\draw[green,rounded corners=1mm] (0-1,0+1) \irregularcircle{0.75cm}{1mm};
\node at (-0.9-1,1+1) {$f$};
\draw (0,0) circle (4.5);
\draw (4.5,0) circle (2);
\coordinate (x) at (3,1.3229);
\coordinate (O) at (0,0);
\coordinate (c) at (4.5,0);
\draw (O)--(x);
\draw (c)--(x);
\draw pic[draw=orange, <->,"$\theta$", angle eccentricity=1.75] {angle = c--O--x};
\draw pic[draw=orange, <->,"$\phi$", angle eccentricity=1.5] {angle = x--c--O};
\node at (1.5,1) {$r$};
\node at (3.9,1) {$t$};
\node at (2.25,-0.4) {1};
\end{tikzpicture}
\begin{tikzpicture}[scale=0.5]
\draw [<-,line width=1pt] (-4.5,0)node[left] {$-x$}--(4.5,0);
\draw [->,line width=1pt] (0,-5)--(0,5)node[right] {$z$};
\draw[green,rounded corners=1mm] (0-1,2.6458) \irregularcircle{0.75cm}{1mm};
\draw (4.5,0)--(6.8,0);
\node at (4.5+1.15,-0.475) {$R$};
\draw (4.5,3.9837)--(6.8,0);
\node at (4.5+1.5,2.2) {$p$};
\node at (-0.9-1,2.6458+1) {$f$};
\coordinate (x) at (3,2.6458);
\coordinate (O) at (0,0);
\coordinate (c) at (4.5,0);
\draw (x)--(4.5,2.6458);
\node at (3.9,3) {$t$};
\draw (4.5,-5)--(4.5,5);
\draw (-4.5,-5)--(-4.5,5);
\draw (6.8,0) circle (4.6);
\node at (3.4,-0.5) {$h$};
\node at (6.3,-2.2) {$\sqrt{p^2-R^2}$};
\draw [dashed] (-4.5,2.6458)--(4.5,2.6458);
\end{tikzpicture}
\caption{Cylindrical scanning geometry with coordinates.}
\label{fig1}
\end{figure}
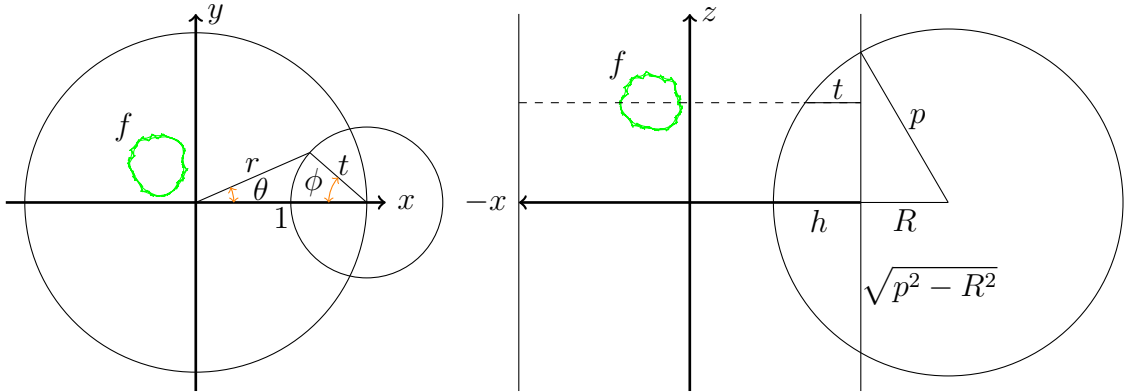
Then, we have the coordinate transformations
$$t=\sqrt{p^2-z^2} -R,\ \ \ h=p-R,$$
$$r=\sqrt{t^2+1-2t\cos\phi},\ \ \ \frac{t}{\sin\theta}=\frac{r}{\sin\phi}.$$
Let $\mathrm{d}s=\sqrt{1+\paren{\frac{\mathrm{d}t}{\mathrm{d}z}}^2}\mathrm{d}z$ be the arc measure on the circular arc with height $h$ in the right-hand figure of figure \ref{fig2}. Then the surface element on the circular arc of revolution (i.e., a lemon) is
\begin{equation}
\begin{split}
\mathrm{d}A&=t\mathrm{d}s\mathrm{d}\phi\\
&=\frac{tp}{\sqrt{p^2-z^2}}\mathrm{d}\phi\mathrm{d}z\\
\end{split}
\end{equation}
Let $L^2_c(\Omega)$ denote the set of square integrable functions with compact support on $\Omega \subset \mathbb{R}^3$. Let $f\in L^2_c(C_{\epsilon})$, where 
$$C_{\epsilon}=\{(x,y,z)\in\mathbb{R}^3 : \sqrt{x^2+y^2} \leq 1-\epsilon\},$$
for some small offset $0<\epsilon<1$. We define the Radon transform
\begin{equation}
\label{Rf}
\mathcal{R}f(p,R,\theta_0,z_0)=p\int_{-\sqrt{p^2-R^2}}^{\sqrt{p^2-R^2}}\frac{t}{\sqrt{p^2-z^2}}\int_{-\pi}^{\pi}f\paren{r,\sin^{-1}\paren{\frac{t}{r}\sin\phi}+\theta_0,z+z_0}\mathrm{d}\phi\mathrm{d}z,
\end{equation}
which defines the integrals of $f$ over lemons, with central axis $\{(\cos\theta_0,\sin\theta_0,z) : z\in\mathbb{R}\}$, where $(\cos\theta_0,\sin\theta_0,z_0)$ is the center of the lemon, and $p$ and $R$ are the radii of the lemon, as illustrated in figure \ref{fig1}. The picture shown in figure \ref{fig1} corresponds to a lemon with center $(1,0,0)$. To relate the variables $p,R,\theta_0,z_0$ to physical quantities and figure \ref{fig2}, the source position, $\vs$, detector position, $\vd$, and scattered energy, $E'$, determine $p$, $R$, and $\theta_0$. For example, as $\vs$ and $\vd$ move further apart and $E'$ stays fixed, $p$ increases. The position of $f$ on the conveyor determines $z_0$, which induces translation on $f$ in the $z$ direction.

We define the limited data transform
\begin{equation}
\mathcal{R}_Lf(h,\theta_0,z_0) =  \mathcal{R}f\paren{\frac{\alpha^2+h^2}{2h},\frac{\alpha^2-h^2}{2h},\theta_0,z_0},
\end{equation}
where $h\in (0,\alpha]$ is the height of the lemon as in figure \ref{fig1}. In this case, the distance between the source and detector is fixed at the value $2\alpha$, and $\sqrt{p^2-R^2} = \alpha$. To measure $\Rc f$, sources and detectors would need to be placed at every point on the lower and upper half cylinder, respectively, as depicted in figure \ref{fig2}. In this case, the set of source and detector positions is two-dimensional. To measure $\Rc_L f$, the sources and detectors are placed on two circles (rings) a fixed distance $2\alpha$ apart. The source and detector rings are the intersections of the planes $\{z=-\alpha\}$ and $\{z=\alpha\}$, respectively, with the boundary of $C_0$, and the set of source and detector positions is one-dimensional. In the following subsections, we explore the injectivity and stability properties of $\Rc$ and $\Rc_L$

\subsection{Injectivity and inversion method}
In this subsection, we prove injectivity of $\mathcal{R}_L$ and $\Rc$, and provide an inversion method using the theory of Volterra equations \cite{tricomi1985integral} and Neumann series. We now have our first main theorem.

\begin{theorem}
$\Rc$ and $\Rc_L$ are injective on domain $L^2_c(C_{\epsilon})$, for any fixed $\epsilon \in (0,1)$.

\begin{proof}
Taking the Fourier transform in $z_0$ on both sides of \eqref{Rf} yields
\begin{equation}
\label{equ_p_1}
\widehat{\mathcal{R}f}(p,R,\theta_0,\eta)=2p\int_{0}^{\sqrt{p^2-R^2}}\cos(\eta z)\frac{t}{\sqrt{p^2-z^2}}\int_{-\pi}^{\pi}\hat{f}\paren{r,\sin^{-1}\paren{\frac{t}{r}\sin\phi}+\theta_0,\eta}\mathrm{d}\phi\mathrm{d}z,
\end{equation}
where $\eta$ is dual to $z$. Taking the Fourier components in $\theta_0$ on both sides of \eqref{equ_p_1} yields
\begin{equation}
\label{equ_p_2}
\widehat{\mathcal{R}f}_n(p,R,\eta)=4\int_{0}^{\sqrt{p^2-R^2}}\cos(\eta z)\frac{tp}{\sqrt{p^2-z^2}}\int_{0}^{\pi}\cos(n\theta)\hat{f}_n\paren{r,\eta}\mathrm{d}\phi\mathrm{d}z,
\end{equation}
for $n\in \mathbb{Z}$, where 
$$\hat{f}_n\paren{r,\eta}=\frac{1}{2\pi}\int_{0}^{2\pi}\hat{f}(r,\theta_0,\eta)e^{-i n \theta_0}\mathrm{d}\theta_0,$$
and
$$\widehat{\mathcal{R}f}_n(p,R,\eta)=\frac{1}{2\pi}\int_{0}^{2\pi}\widehat{\mathcal{R}f}(p,R,\theta_0,\eta)e^{-i n \theta_0}\mathrm{d}\theta_0.$$
Substituting $t=\sqrt{p^2-z^2} -R$ in the $z$ integral of \eqref{equ_p_2}yields
\begin{equation}
\label{equ_p_3}
\widehat{\mathcal{R}f}_n(p,R,\eta)=4\int_{0}^{h}\frac{t(t+R)}{\sqrt{p^2-(t+R)^2}}\cos\paren{\eta\sqrt{p^2-(t+R)^2}}\int_{0}^{\pi}\cos(n\theta)\hat{f}_n\paren{r,\eta}\mathrm{d}\phi\mathrm{d}t,
\end{equation}
where $r=\sqrt{t^2+1-2t\cos\phi}$, and $\sin\theta=\frac{t}{r}\sin\phi$. 

Let us now substitute $r=\sqrt{t^2+1-2t\cos\phi}$ in the $\phi$ integral of \eqref{equ_p_3}. We have
$$\frac{\mathrm{d}r}{\mathrm{d}\phi}=\frac{t}{r}\sin\phi=\sin\theta,\ \ \  \cos\theta=\frac{r^2+1-t^2}{2r}.$$
Then
\begin{equation}
\begin{split}
&\widehat{\mathcal{R}f}_n(p,R,\eta)=\\
&4\int_{0}^{h}\frac{t(t+R)}{\sqrt{p^2-(t+R)^2}}\cos\paren{\eta\sqrt{p^2-(t+R)^2}}\int_{1-t}^{1-\epsilon}\frac{T_{|n|}\paren{\frac{r^2+1-t^2}{2r}}}{\sqrt{1-\paren{\frac{r^2+1-t^2}{2r}}^2}}\hat{f}_n\paren{r,\eta}\mathrm{d}r\mathrm{d}t\\
&=4\int_{1-h}^{1-\epsilon}\int_{1-r}^{h}\frac{t(t+R)\cos\paren{\eta\sqrt{p^2-(t+R)^2}}T_{|n|}\paren{\frac{r^2+1-t^2}{2r}}}{\sqrt{p^2-(t+R)^2}\sqrt{1-\paren{\frac{r^2+1-t^2}{2r}}^2}}\hat{f}_n\paren{r,\eta}\mathrm{d}t\mathrm{d}r,
\end{split}
\end{equation}
where $T_{|n|}$ is a Chebyshev polynomial degree $|n|$. Substituting $r=1-u$ yields
\begin{equation}
\label{volt}
\begin{split}
&\widehat{\mathcal{R}f}_n(p,R,\eta)=4\int_{\epsilon}^{h}K_n(\eta ; p,R,u)\tilde{\hat{f}}_n\paren{u,\eta}\mathrm{d}u,
\end{split}
\end{equation}
a Volterra equation  of the first kind, where $\tilde{\hat{f}}_n\paren{u,\eta}=\hat{f}_n\paren{1-u,\eta}$, and
\begin{equation}
\label{B.9}
\begin{split}
&K_n(\eta ; p,R,u)=\\
&2(1-u)\int_{u}^{h}\frac{t(t+R)\cos\paren{\eta\sqrt{p^2-(t+R)^2}}T_{|n|}\paren{\frac{(1-u)^2+1-t^2}{2(1-u)}}}{\sqrt{p^2-(t+R)^2}\sqrt{t^2-u^2}\sqrt{2+t-u}\sqrt{2-t-u} }\mathrm{d}t\\
&=2(1-u)\int_{0}^{1}\frac{t(t+R)\cos\paren{\eta\sqrt{p^2-(t+R)^2}}T_{|n|}\paren{\frac{(1-u)^2+1-t^2}{2(1-u)}}}{\sqrt{v}\sqrt{1-v}\sqrt{p+(t+R)}\sqrt{t+u} \sqrt{2+t-u}\sqrt{2-t-u} }\mid_{t=u+v(h-u)}\mathrm{d}v,
\end{split}
\end{equation}
after substituting $t=u+v(h-u)$ in the last step. We have
\begin{equation}
\label{K_diag}
\begin{split}
K_n(\eta ; p,R,h)&=\frac{\sqrt{ph(1-h)}}{2}\int_{0}^{1}\frac{1}{\sqrt{v}\sqrt{1-v}}\mathrm{d}v\\
&=\frac{\pi\sqrt{ph(1-h)}}{2},
\end{split}
\end{equation}
which is non-zero for $h \in [\epsilon, 1-\kappa]$, where $\kappa \in (0,1-\epsilon)$. 

Now that we have a general expression for the Fourier decomposition of $\mathcal{R}f$, we prove injectivity of $\mathcal{R}_L$. Let us define
$$p = p(h)=\frac{\alpha^2+h^2}{2h},\ \ \ \ \ \ R = R(h)=p(h)-h,\ \ \ \text{and}, \ \ \  t = t(h,u) = u+v(h-u).$$
Then,
\begin{equation}
\begin{split}
\widehat{\mathcal{R}_Lf}_n(h,\eta) &= 4\int_{\epsilon}^{h}K_n(\eta ; p(h),R(h),u)\tilde{\hat{f}}_n\paren{u,\eta}\mathrm{d}u, \\
&= \int_{\epsilon}^{h}\widetilde{K_n}(\eta ; h,u)\tilde{\hat{f}}_n\paren{u,\eta}\mathrm{d}u.
\end{split}
\end{equation}
We now aim to prove that $\widetilde{K_n}(\eta,\cdot,\cdot)$, and its first order derivative with respect to $h$, is bounded on $T = \{ (h,u) : \epsilon \leq h \leq 1-\kappa, \epsilon \leq u \leq h\}$, for any fixed $n$ and $\eta$. To do this, we show that all the terms dependent on $h$ under the integral sign on the third line of \eqref{B.9} are bounded and have bounded first order derivative with respect to $h$ within the limits of integration. First, for $h \in [\epsilon, 1-\kappa]$, we have the inequalities
\begin{equation}
\label{bound_1}
|p|\leq\frac{1+\alpha^2}{2\epsilon}, \ \ \ \ \ \left|\frac{\mathrm{d}p}{\mathrm{d}h}\right|=\left|\frac{1}{2}-\frac{\alpha^2}{2h^2}\right|\leq \frac{1}{2}+\frac{\alpha^2}{2\epsilon^2},
\end{equation}
and
\begin{equation}
\label{bound_2}
|R|\leq |p| \leq \frac{1+\alpha^2}{2\epsilon}, \ \ \ \ \ \left|\frac{\mathrm{d}R}{\mathrm{d}h}\right|=\left|\frac{1}{2}-\frac{\alpha^2}{2h^2}-1\right|\leq \frac{1}{2}+\frac{\alpha^2}{2\epsilon^2}.
\end{equation}
Further,
\begin{equation}
\label{bound_3}
|t|\leq 1, \ \ \ \text{and} \ \ \left|\frac{\mathrm{d}t}{\mathrm{d}h}\right| =\left|v\right|\leq 1.
\end{equation}
We have, $\left|\cos\paren{\eta\sqrt{p^2-(t+R)^2}}\right|\leq 1$, and, letting $g_1 = g_1(h,u) = t(h,u)+R(h)$,
\begin{equation}
\label{equ_111}
\begin{split}
&\frac{\mathrm{d}}{\mathrm{d}h}\cos\paren{\eta\sqrt{p^2- g_1^2}}=\\
& -\frac{\eta\left[pp' - g_1 g'_1 \right]} {\sqrt{p^2-g_1^2}} \sin\paren{\eta\sqrt{p^2-g_1^2}}\\
&=-{\eta}^2 \left[ pp'- g_1 g'_1 \right] \sinc\paren{\eta\sqrt{p^2-g_1^2}},
\end{split}
\end{equation}
which is bounded by \eqref{bound_1}-\eqref{bound_3}. In \eqref{equ_111}, $p'$ and $g'_1$ are the derivatives of $p$ and $g_1$, respectively, with respect to $h$.

We have $\left|T_{|n|}\paren{\frac{(1-u)^2+1-t^2}{2(1-u)}}\right|\leq 1$, and, for $n\neq 0$,
\begin{equation}
\begin{split}
\frac{\mathrm{d}}{\mathrm{d}h}T_{|n|}\paren{\frac{(1-u)^2+1-t^2}{2(1-u)}}&=-\frac{v \cdot t}{1-u}T'_{|n|}\paren{\frac{(1-u)^2+1-t^2}{2(1-u)}}\\
&=-\frac{|n|v \cdot t}{1-u}U_{|n|-1}\paren{\frac{(1-u)^2+1-t^2}{2(1-u)}},
\end{split}
\end{equation}
where $U_n$ is a Chebyshev polynomial of the second kind. It follows that
\begin{equation}
\label{ineq}
\left|\frac{\mathrm{d}}{\mathrm{d}h}T_{|n|}\paren{\frac{(1-u)^2+1-t^2}{2(1-u)}}\right|\leq \frac{|n|^2}{\kappa},
\end{equation}
noting $|U_{|n|}|\leq |n|+1$, and $\frac{1}{1-u}\leq \frac{1}{\kappa}$. The $n=0$ case is trivial since $T_0=1$. 

Now, letting $g_2 = g_2(h,u) = p(h)+t(h,u)+R(h)$, we have
$$\frac{1}{\sqrt{p+(t+R)} \sqrt{t+u}} = \frac{1}{\sqrt{g_2}\sqrt{t+u}}\leq\frac{1}{2\epsilon},$$
and
\begin{equation}
\label{equ_123}
\begin{split}
\left| \frac{\mathrm{d}}{\mathrm{d}h}\paren{\frac{1}{\sqrt{g_2}\sqrt{t+u}}} \right|
&= \left| \frac{ (t+u)g_2' + v \cdot g_2}{2 \paren{g_2}^{\frac{3}{2}}\paren{t+u}^{\frac{3}{2}}} \right|\\
&\leq \frac{2|g_2'| + |g_2|}{2(2\epsilon)^3},
\end{split}
\end{equation}
which is bounded, by \eqref{bound_1}-\eqref{bound_3}. In \eqref{equ_123}, $g'_2$ denotes the derivative of $g_2$ with respect to $h$

Finally, we have
$$\frac{1}{\sqrt{2+t-u}\sqrt{2-t-u}} \leq \frac{1}{\sqrt{2\kappa}},$$
and
\begin{equation}
\begin{split}
\left| \frac{\mathrm{d}}{\mathrm{d}h}\frac{1}{\sqrt{2+t-u}\sqrt{2-t-u}}\right| &= \left| \frac{v\cdot t}{ (2+t-u)^{\frac{3}{2}} (2-t-u)^{\frac{3}{2}} } \right| \\
& \leq \paren{\frac{1}{2\kappa}}^{\frac{3}{2}}.
\end{split}
\end{equation}
It follows that the kernel, $\widetilde{K_n}(\eta ; h,u)$, is bounded and has bounded first order derivative with respect to $h$ on $T$. Further, $K_n(\eta ; h,h)\neq 0$, for $h\in[\epsilon, 1-\kappa]$, by \eqref{K_diag}. Thus, using classical theory on Volterra equations \cite{tricomi1985integral}, we can recover $\tilde{\hat{f}}_n\paren{u,\eta}$ uniquely for all $n\in\mathbb{Z}$, $\eta\in\mathbb{R}$, and $u \in [\epsilon, 1]$, after letting $\kappa \to 0$. This proves injectivity of $\Rc_L$ on domain $L^2_c(C_{\epsilon})$. Injectivity of $\Rc$ follows, as $\Rc f = 0 \implies \Rc_L f = 0$. This finishes the proof.
\end{proof}
\end{theorem}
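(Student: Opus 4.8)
The plan is to reduce the injectivity of $\mathcal{R}_L$ to a family of one-dimensional Volterra equations of the first kind --- one for each angular Fourier mode $n\in\mathbb{Z}$ and each frequency $\eta$ dual to the conveyor coordinate $z_0$ --- and then to invoke the classical uniqueness theory for first-kind Volterra equations \cite{tricomi1985integral}. Injectivity of $\mathcal{R}$ is then immediate: since $\mathcal{R}_L f(h,\theta_0,z_0) = \mathcal{R}f(p(h),R(h),\theta_0,z_0)$ is a restriction of $\mathcal{R}f$, one has $\mathcal{R}f = 0 \implies \mathcal{R}_L f = 0$. The overall strategy parallels \cite[Theorem 4.3]{webber2022ellipsoidal}.

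First I would take the Fourier transform in $z_0$ in \eqref{Rf}; since $z_0$ enters $f$ only through the translation $z\mapsto z+z_0$ and the remaining integrand is even in $z$, this yields the cosine-integral identity \eqref{equ_p_1}. Next, I would expand in a Fourier series in $\theta_0$: the inner $\phi$-integral couples $\theta_0$ to $f$ only through the shift $\sin^{-1}(\tfrac{t}{r}\sin\phi)+\theta_0$, so the $n$-th coefficient isolates $\hat f_n(r,\eta)$ against a kernel containing $\cos(n\theta)$, where $\sin\theta = \tfrac{t}{r}\sin\phi$. I would then change variables from $z$ to $t$ via $t=\sqrt{p^2-z^2}-R$, and from $\phi$ to $r$ via $r=\sqrt{t^2+1-2t\cos\phi}$ (so that $\cos\theta = \tfrac{r^2+1-t^2}{2r}$); this turns $\cos(n\theta)$ into the Chebyshev polynomial $T_{|n|}$, and after interchanging the $t$- and $r$-integrals and substituting $r=1-u$ one arrives at the first-kind Volterra equation \eqref{volt}. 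Specializing $p=p(h)$ and $R=R(h)=p(h)-h$ as dictated by the limited-data geometry produces the analogous equation for $\mathcal{R}_L$ with kernel $\widetilde{K_n}(\eta;h,u)$ on the triangle $T=\{(h,u) : \epsilon\le u\le h\le 1-\kappa\}$.

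To apply the Volterra inversion theory, two facts must be verified: (i) that $\widetilde{K_n}(\eta;h,u)$ and its first $h$-derivative are bounded on $T$; and (ii) that the diagonal value $K_n(\eta;p(h),R(h),h)$ is nonzero for $h\in[\epsilon,1-\kappa]$. For (ii), evaluating the integral defining $K_n$ at $u=h$ collapses the $t$-interval, forces the Chebyshev argument to equal $1$ so that $T_{|n|}=1$, and leaves the Beta integral $\int_0^1 v^{-1/2}(1-v)^{-1/2}\,\mathrm{d}v=\pi$; the resulting diagonal value is $\tfrac{\pi}{2}\sqrt{p\,h(1-h)}$, cf.\ \eqref{K_diag}, which is manifestly positive on the stated range. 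For (i), the substitution $t=u+v(h-u)$ pulls the kernel onto the fixed interval $v\in[0,1]$, after which I would bound, one factor at a time, the polynomial data $p,R,t$ and their $h$-derivatives, the cosine factor and its derivative (which produces an $\eta^2\,\sinc$ term), the Chebyshev factor and its derivative (via $T_{|n|}'=|n|\,U_{|n|-1}$ and $|U_m|\le m+1$), and the reciprocal square-root factors appearing in \eqref{B.9} --- each of which stays controlled because the offsets $\epsilon$ and $\kappa$ keep every argument bounded away from its singular locus, as recorded in \eqref{bound_1}--\eqref{bound_3}. Having solved each Volterra equation uniquely for $\tilde{\hat{f}}_n(u,\eta)$ on $[\epsilon,1-\kappa]$ and letting $\kappa\to 0$, I recover $\hat f_n(r,\eta)$ for all $n$ and $\eta$, hence $\hat f$, hence $f$.

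The main obstacle is item (i): the kernel $K_n$ carries several reciprocal square-root singularities --- from $\sqrt{v}$, $\sqrt{1-v}$, $\sqrt{t+u}$, $\sqrt{2+t-u}$, $\sqrt{2-t-u}$, and $\sqrt{p+(t+R)}$ --- and one must check that differentiating in $h$, which both moves the upper endpoint of the $t$-integral and appears inside $t=t(h,u)$, does not create a non-integrable singularity. The device that makes this manageable is precisely the substitution $t=u+v(h-u)$: it absorbs the moving endpoint into the integrand over the fixed domain $v\in[0,1]$, and since the only $v$-singular factor $1/(\sqrt{v}\sqrt{1-v})$ is independent of $h$, differentiation under the integral sign is legitimate and every resulting term is bounded using \eqref{bound_1}--\eqref{bound_3}. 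With this bookkeeping in place, the first-kind Volterra theory of \cite{tricomi1985integral} applies directly and the proof concludes.
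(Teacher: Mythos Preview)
Your proposal is correct and follows essentially the same route as the paper: Fourier transform in $z_0$, Fourier series in $\theta_0$, the two changes of variable $z\to t$ and $\phi\to r$ producing the Chebyshev factor $T_{|n|}$, the substitution $r=1-u$ to obtain the first-kind Volterra equation \eqref{volt}, the endpoint-fixing substitution $t=u+v(h-u)$, and the term-by-term verification of boundedness of $\widetilde{K_n}$ and $\partial_h\widetilde{K_n}$ together with the non-vanishing diagonal \eqref{K_diag}. The only point worth noting is that the paper carries out the bounding of each factor explicitly (including the $\sinc$ rewrite for the cosine derivative and the $|U_{|n|-1}|\le|n|$ bound for the Chebyshev derivative), so your sketch of item~(i) would need exactly those computations to be complete, but the strategy is identical.
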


\section{Microlocal analysis}
\label{microlocal_section}

In this section, we analyze the stability of $\Rc$ and $\Rc_L$ from a microlocal perspective. Specifically, we show that $\Rc$ and $\Rc_L$ are elliptic FIO which satisfy the Bolker condition. We also investigate the edge detection capabilities of $\Rc$ and $\Rc_L$. We first analyze the stability of $\Rc_L$ in the following subsection, and then $\Rc$ later in subsection \ref{gen_mic}.

\subsection{Analysis of $\Rc_L$; the $\sqrt{p^2-R^2}=\alpha$ case}
\label{micro_lim}
In this case, the lemon surfaces have the defining function
$$\Psi(\vx; s,\theta_0,z_0)=\frac{|\vx_T|^2-\alpha^2}{|\vx_T'|} - s, \ \ \ s \leq 0$$
where $s=-2R = \paren{h^2-\alpha^2}/{h}$, and $\vx_T=(x-\cos\theta_0,y-\sin\theta_0,z-z_0)^T=(\vx_T',z-z_0)^T$, $\vx'=(x,y)^T$, $\vx=(x,y,z)^T$. $\Psi$ is a defining function in the sense that $\mathcal{L} = \{\Psi = 0\}$ is a lemon surface, when $s\leq 0$. When $s>0$, $\{\Psi=0\}$ is an apple surface (see \cite[figure 4]{rigaud20183d}), and when $s=0$, $\{\Psi=0\}$ is a sphere, radius $\alpha$ with center $(\cos\theta_0,\sin\theta_0,z_0)$. 

For $f\in L_c^2(C_{\epsilon})$, where $\epsilon \in (0,1)$, we have the alternate expression for $\Rc_L$
\begin{equation}
\begin{split}
\mathcal{R}_Lf(s,\theta_0,z_0) &= \int_{C_{\epsilon}}\left|\nabla_{\vx}\Psi\right|\delta\paren{\Psi(\vx; s,\theta_0,z_0)}f(\vx)\mathrm{d}\vx \\
&= \int_{-\infty}^{\infty}\int_{C_{\epsilon}}\left|\nabla_{\vx}\Psi\right|e^{i\sigma\Psi(\vx; s,\theta_0,z_0)}f(\vx)\mathrm{d}\vx\mathrm{d}\sigma,
\end{split}
\end{equation}
where $\Phi=\sigma\Psi$ is the phase function, and $a=\left|\nabla_{\vx}\Psi\right|$ is the amplitude. The weight $\left|\nabla_{\vx}\Psi\right|$ is included so that $\Rc_L f$ defines the integrals of $f$ over lemons with respect to the surface measure on the lemon surface, in line with the theory of \cite{palamodov2012uniform}. Note, we now vary $s$ instead of $h$ as before in the injectivity proofs. We do this to make the calculations in this section easier. We now have our second main theorem.

\begin{theorem}
\label{micro_R_L}
$\Rc_L$ is an elliptic FIO order $-1$ which satisfies the Bolker condition.
\end{theorem}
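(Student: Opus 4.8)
The plan is to extract the Fourier integral operator structure directly from the oscillatory-integral form of $\Rc_L$ written just above, for which the phase variable is the single scalar $\sigma\in\rr$ (so $N=1$), the phase is $\Phi(\vy,\vx,\sigma)=\sigma\,\Psi(\vx;\vy)$ with $\vy=(s,\theta_0,z_0)\in Y$ and $\vx\in X$, and the amplitude is $a=|\nabla_\vx\Psi|$. Since $a$ is independent of $\sigma$, it lies in $S^0(Y\times X,\rr)$, so by Definition \ref{FIOdef} the order of $\Rc_L$ is $m+N/2-(n_X+n_Y)/4=0+\tfrac12-\tfrac{6}{4}=-1$. To see that $a$ is an elliptic symbol of order $0$ I would use that $\supp f\subset C_{\epsilon}$, so we may take $X=\{(x,y,z):x^2+y^2<1\}$; on $Y\times X$ the quantity $\rho:=|\vx_T'|$ lies in $(0,2)$, so $\Psi=\rho+(\zeta^2-\alpha^2)/\rho-s$ (with $\zeta:=z-z_0$) is smooth there, and $\nabla_\vx\Psi$ never vanishes: $\partial_z\Psi=2\zeta/\rho\neq0$ when $\zeta\neq0$, while for $\zeta=0$ one has $(\partial_x\Psi,\partial_y\Psi)=\tfrac{\rho^2+\alpha^2}{\rho^3}\,\vx_T'\neq0$. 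Hence $|a|$ is bounded below on compacts and $\Rc_L$ is an elliptic FIO of order $-1$.

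Next I would check that $\Phi=\sigma\Psi$ is a clean, nondegenerate phase function in the sense of Definitions \ref{phasedef}--\ref{def:canon}: it is homogeneous of degree $1$ in $\sigma$; $\dd_\sigma\Phi=\Psi$, so $\Sigma_\Phi=\{\Psi=0\}$, and since $\partial_s\Psi=-1$ the differential $\dd\Psi=\dd(\dd_\sigma\Phi)$ never vanishes and has constant rank $1$, so $\Sigma_\Phi$ is a smooth manifold and $\Phi$ is clean by the Constant Rank Theorem; $\Phi$ is nondegenerate because on $\Sigma_\Phi$ we have $\dd_\vx\Phi=\sigma\nabla_\vx\Psi\neq0$ and $\dd_\vy\Phi=\sigma\nabla_\vy\Psi$ has first component $\sigma\,\partial_s\Psi=-\sigma\neq0$, and a fortiori $\dd\Phi\neq0$. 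The canonical relation is then
\[
\Cc=\big\{\big((\vy,\sigma\nabla_\vy\Psi);(\vx,-\sigma\nabla_\vx\Psi)\big)\;:\;\Psi(\vx;\vy)=0,\ \sigma\neq0\big\},
\]
a $6$-dimensional manifold parametrized by $(\vy,\vx,\sigma)$ subject to the single relation $\Psi=0$, with left projection $\Pi_L(\vy,\vx,\sigma)=(\vy,\sigma\nabla_\vy\Psi)$.

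The crux is to show $\Pi_L$ is an embedding. Since $\dim\Cc=\dim T^*(Y)=6$, it suffices to show that $\Pi_L$ is injective and has a smooth local inverse near $\Pi_L(\Cc)$. Writing the fibre coordinate over $\vy$ as $\sigma(\partial_s\Psi,\partial_{\theta_0}\Psi,\partial_{z_0}\Psi)$, I would invert componentwise: $\partial_s\Psi=-1$ gives $\sigma$; $\partial_{z_0}\Psi=-2\zeta/\rho$ gives $\mu:=\zeta/\rho$; the relation $\Psi=0$, i.e. $(1+\mu^2)\rho^2-s\rho-\alpha^2=0$, has a unique positive root $\rho$ (its two roots multiply to $-\alpha^2/(1+\mu^2)<0$), whence $\zeta=\mu\rho$ and $z=z_0+\zeta$; and $\partial_{\theta_0}\Psi=c_1\,(\rho^2-\zeta^2+\alpha^2)/\rho^3$, where $c_1:=\vx_T'\cdot(\sin\theta_0,-\cos\theta_0)$ and $\rho^2-\zeta^2+\alpha^2>\rho^2>0$ on $\Sigma_\Phi$ (because $\Psi=0$ with $s\le0$ forces $\zeta^2<\alpha^2$), so $c_1$ is recovered. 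Every step is smooth, and the only remaining freedom is the radial component $c_2:=\vx_T'\cdot(\cos\theta_0,\sin\theta_0)$, constrained only by $c_2^2=\rho^2-c_1^2$; the two roots $c_2=\pm\sqrt{\rho^2-c_1^2}$ furnish two $\Pi_L$-preimages, exchanged by reflection in the plane containing the lemon's axis and orthogonal to $(\cos\theta_0,\sin\theta_0)$. However $|(x,y)|^2=c_1^2+(1+c_2)^2=1+\rho^2+2c_2$, which exceeds $1$ whenever $c_2\ge0$; thus the root $c_2=+\sqrt{\rho^2-c_1^2}$ lies outside the open unit cylinder and is excluded from $X$, so within $X$ exactly one preimage survives. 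This makes $\Pi_L$ injective, and the inversion just described is a smooth map $(\vy,\vxi)\mapsto(\vy,\vx,\sigma)$ on a neighbourhood of $\Pi_L(\Cc)$, i.e. a smooth left inverse of $\Pi_L$; hence $\Pi_L$ is an injective immersion and $\Rc_L$ satisfies the Bolker condition of Definition \ref{def:bolker}.

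The routine pieces to be filled in are the explicit expressions for $\nabla_\vx\Psi$ and $\nabla_\vy\Psi$ and the symbol estimates showing $a\in S^0$ is elliptic. The one genuinely delicate point — and the reason the hypothesis $\supp f\subset C_{\epsilon}$ is needed — is the final step above: on the full cotangent space $\Pi_L$ is generically two-to-one, since the lemon is a surface of revolution and each fibre covector has a ``mirror'' preimage obtained by reflection across a plane through the axis; the Bolker condition holds precisely because the mirror preimage (the one with $c_2\ge0$) always has $x^2+y^2>1$ and hence lies outside the support cylinder of $f$.
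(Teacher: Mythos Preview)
Your proof is correct, and the FIO setup (cleanness, nondegeneracy, ellipticity, order) matches the paper's argument essentially line for line, albeit with a slightly slicker verification of $\nabla_\vx\Psi\neq0$ by the $\zeta=0$/$\zeta\neq0$ dichotomy in place of the paper's hyperboloid intersection.

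Where you genuinely diverge from the paper is in the Bolker verification. The paper treats injectivity and immersion separately: for injectivity it fixes $\vx$ on a given lemon, writes $g$ as a function of $z'=z-z_0$, and proves that $z'/g(z')$ is strictly monotone by differentiating; for the immersion it computes the $3\times3$ block $M=(\vr_1^T,\vr_2^T,\vr_3^T)$ of $D\Pi_L$ explicitly and shows $\det M\neq0$ via a cross-product identity. You instead build a single smooth left inverse of $\Pi_L$: recover $\sigma$ from $\partial_s\Psi$, $\mu=\zeta/\rho$ from $\partial_{z_0}\Psi$, $\rho$ as the unique positive root of $(1+\mu^2)\rho^2-s\rho-\alpha^2=0$, $c_1$ from $\partial_{\theta_0}\Psi$, and finally $c_2=-\sqrt{\rho^2-c_1^2}$ by the support constraint. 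This buys you both injectivity and immersion at once and sidesteps the determinant calculation entirely; the price is that you must check $\rho^2-c_1^2>0$ strictly on $X$ so that the square root branch is smooth, which you do implicitly via $|\vx'|^2=1+\rho^2+2c_2<1\Rightarrow c_2<0$. The paper's monotonicity argument and your quadratic-root argument are really the same fact in different coordinates (both say that on a fixed lemon the pair $(\rho,\zeta)$ is determined by $\zeta/\rho$), but your packaging is more economical. Both proofs land on the same geometric punchline: the two-to-one ambiguity of $\Pi_L$ is a reflection through the plane tangent to $\partial C_0$, and the mirror image always falls outside the open unit cylinder.
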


\begin{proof}
We first prove $\Rc_L$ is an FIO. Clearly, $\Phi=\sigma\Psi$ is homogeneous order one, since $\Psi$ does not depend on $\sigma$. Also, $\mathrm{d}_s\Phi = -\sigma \neq 0$, hence $\mathrm{d}\Phi, \mathrm{d}_{\vy}\Phi \neq \mathbf{0}$. The lemon surfaces are smooth manifolds away from their points of self-intersection (i.e., $(\cos\theta_0,\sin\theta_0,z_0 \pm\alpha)$), which we do not consider as they are outside the support of $f$. Thus, $\Phi$ is clean.

Let $g=|\vx_T'|$. Then, we have
\begin{equation}
\begin{split}
\nabla_{\vx}\Phi&=\sigma\paren{\frac{2\vx_T}{g}-\paren{\frac{|\vx_T|^2-\alpha^2}{g^3}}\vv}\\
&=\frac{\sigma}{g}\paren{\paren{2-\frac{s}{g}}(x-\cos\theta_0),\paren{2-\frac{s}{g}}(y-\sin\theta_0),2(z-z_0)},
\end{split}
\end{equation}
where where $\vv=(x-\cos\theta_0,y-\sin\theta_0,0)$, and $s = \frac{|\vx_T|^2-\alpha^2}{|\vx_T'|}$. Thus,
\begin{equation}
\left|\nabla_{\vx}\Phi\right| = \sigma\sqrt{ \paren{2-\frac{s}{g}}^2 + \frac{4(z-z_0)^2}{g^2} }.
\end{equation}
Now,
$$2-\frac{s}{g}=\frac{(x-\cos\theta_0)^2+(y-\sin\theta_0)^2-(z-z_0)^2+\alpha^2}{g^2},$$
which is zero if and only if $(x-\cos\theta_0)^2+(y-\sin\theta_0)^2-(z-z_0)^2+\alpha^2=0$. The two-sheeted hyperboloid $H=\{(x-\cos\theta_0)^2+(y-\sin\theta_0)^2-(z-z_0)^2+\alpha^2=0\}$ intersects the lemon only at its points of self-intersection (i.e., $(\cos\theta_0,\sin\theta_0,z_0 \pm\alpha)$), which we do not consider as they are outside the support of $f$.  It follows that $\mathrm{d}_{\vx} \Phi \neq \mathbf{0}$, and $\Phi$ is non-degenerate. Further, $a = \left|\nabla_{\vx}\Phi\right| >0$, and hence $a$ is an elliptic symbol. $a$ is order zero since it is smooth and does not depend on $\sigma$. Hence, by Definition \ref{FIOdef}, $\Rc_L$ is an elliptic FIO order $\mathcal{O}(\Rc_L) = 0 +\frac{1}{2} - \frac{3}{2} = -1$. 

Let $Y_1 = \dot{\mathbb{R}}\times [0,2\pi] \times \mathbb{R}$. The left projection $\Pi_L : C_{\epsilon} \times Y_1 \to \Pi_L\paren{ C_{\epsilon} \times Y_1 }$ of $\Rc_L$ is defined
\begin{equation}
\label{pi_L}
\Pi_L(\vx; \sigma,\theta_0,z_0)=\paren{\overbrace{-\sigma}^{\mathrm{d}_s\Phi},\theta_0,z_0,\overbrace{\frac{|\vx_T|^2-\alpha^2}{g}}^{s},\overbrace{\frac{\sigma\paren{\Theta_0^{\perp}\cdot\vx_T'}}{g}\left[2-\frac{s}{g}\right]}^{\mathrm{d}_{\theta_0}\Phi}, \overbrace{\frac{2\sigma(z-z_0)}{g}}^{\mathrm{d}_{z_0}\Phi}},
\end{equation}
where $\Theta_0=(\cos\theta_0,\sin\theta_0)^T$ and $\Theta_0^{\perp}=(\sin\theta_0,-\cos\theta_0)^T$. Note, $g>\epsilon$ on the support of $f$, so we are never dividing by zero in \eqref{pi_L}. By Definition \ref{def:bolker}, $\Rc_L$ satisfies the Bolker condition if $\Pi_L$ is an injective immersion. We now break the remainder of the proof into two parts. First we prove injectivity of $\Pi_L$, and then we prove that $\Pi_L$ is an immersion.\\
\\
\textbf{Injectivity.}
Let $\vx_i = (x_i,y_i,z_i)$, for $i=1,2$, and let $\Pi_L(\vx_1; \sigma,\theta_0,z_0)=\Pi_L(\vx_2; \sigma,\theta_0,z_0)$. Then $\vx_1$ and $\vx_2$ lie on the same lemon, using the $s$ term of \eqref{pi_L}. We also have $(z_1-z_0)/g_1=(z_2-z_0)/g_2$, where $g_i = \sqrt{(x_i-\cos\theta_0)^2+(y_i-\sin\theta_0)^2}$.
Let $z'=z-z_0$. For $\vx$ on a lemon, center $(\cos \theta_0,\sin\theta_0,z_0)$, $h(z')=z'/g(z')$ is a strictly monotone increasing function of $z'$, for $z'\in (-\alpha,\alpha)$. Indeed, $g(z')=\sqrt{R^2+\alpha^2-z'^2}-R$, and
\begin{equation}
\begin{split}
\frac{\mathrm{d}}{\mathrm{d}z'}h(z')&=\frac{R^2+\alpha^2-R\sqrt{R^2-z'^2+\alpha^2}}{\sqrt{R^2-z'^2+\alpha^2}\paren{\sqrt{R^2-z'^2+\alpha^2}-R}^2}\\
&>\frac{p^2-Rp}{p\paren{p-R}^2}\\
&=\frac{1}{p-R}>0, \ \text{for}\ z'\in (-\alpha,\alpha),
\end{split}
\end{equation}
noting $p^2=R^2+\alpha^2$. See figure \ref{F3}, for an example $h(z')$ plot.
\begin{figure}[!h]
\centering
\includegraphics[width=0.9\linewidth, height=5cm, keepaspectratio]{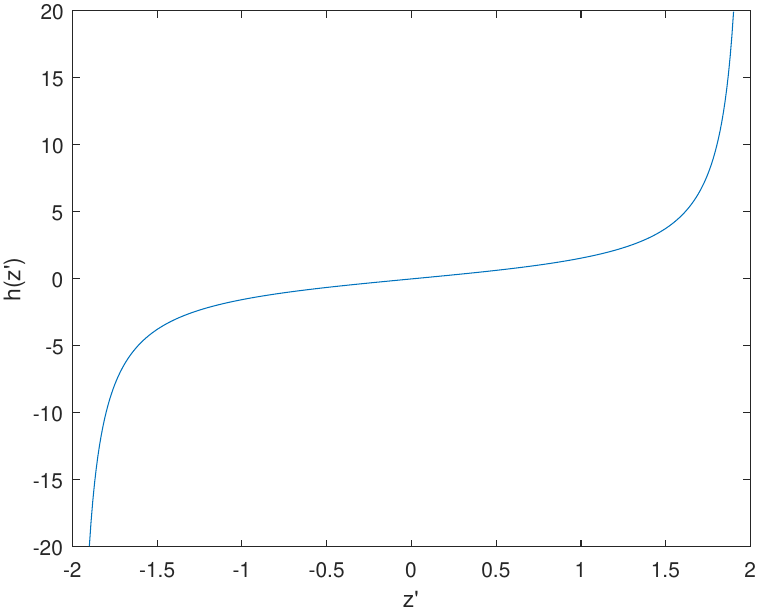}
\caption{Plot of $h(z')$, on $(-\alpha,\alpha)$, when $R, \alpha = 2$. }
\label{F3}
\end{figure}
Thus, $h(z'_1)=h(z'_2) \implies z_1=z_2$, which further implies $g_1=g_2$. By the same arguments as above, towards the start of the proof, $2-\frac{s}{g}\neq0$, and it follows that 
$$\Theta_0^{\perp}\cdot(x_1-\cos\theta_0, y_1-\sin\theta_0)^T=\Theta_0^{\perp}\cdot(x_2-\cos\theta_0, y_2-\sin\theta_0)^T.$$
Thus, either $\vx_1$ and $\vx_2$ are equal, or they are the reflections of one another in the plane $(\Theta_0^T,0)\cdot\vx_T=0$, i.e., the plane tangent to the boundary of $C_0$ at $(\cos\theta_0,\sin\theta_0,0)$. In the latter case, one of $\vx_1$ or $\vx_2$ must lie outside $C_0$, given the convexity of $C_0$ boundary. Therefore, we conclude that $\Pi_L$ is injective, since $f$ is supported on $C_{\epsilon}\subset C_0$. \\
\\
\textbf{Immersion.}
Let $I'_{3 \times 3} = \text{diag}(-1,1,1)$. Then, the derivative of $\Pi_L$ is
\begin{equation}\label{DPiA}
D\Pi_L=\kbordermatrix {&\mathrm{d}\sigma,\mathrm{d}\theta_0,\mathrm{d}z_0 & \nabla_\vx \\
-\sigma,\theta_0,z_0 & I'_{3\times 3} & \textbf{0}_{3 \times 3} \\
s& \cdot & \vr_1\\
\mathrm{d}_{z_0} \Phi & \cdot & \vr_2\\ 
{\mathrm{d}_{\theta_0} \Phi} & \cdot & \vr_3}.
\end{equation}
The $\vr_i$, for $1\leq i \leq 3$, are defined as
\begin{equation}
\begin{split}
\vr_1=\nabla_{\vx}(s)&=\frac{2\vx_T}{g}-\paren{\frac{|\vx_T|^2-\alpha^2}{g^3}}\vv\\
&=\frac{1}{g}\paren{\paren{2-\frac{s}{g}}(x-\cos\theta_0),\paren{2-\frac{s}{g}}(y-\sin\theta_0),2(z-z_0)},
\end{split}
\end{equation}
and
\begin{equation}
\begin{split}
\vr_2=\nabla_{\vx}\paren{\mathrm{d}_{z_0}\Phi}&=\frac{2\sigma}{g}\paren{-\frac{(z-z_0)(x-\cos\theta_0)}{g^2},-\frac{(z-z_0)(y-\sin\theta_0)}{g^2},1},
\end{split}
\end{equation}
and
\begin{equation}
\begin{split}
\vr_3=\nabla_{\vx}\paren{\mathrm{d}_{\theta_0}\Phi}&=\frac{\sigma}{g}\paren{2-\frac{s}{g}}(\sin\theta_0,-\cos\theta_0,0)+\frac{\sigma\paren{\Theta_0^{\perp}\cdot \vx_T'}}{g^3}\left[- \vv + \frac{\vu}{g^2}\right],
\end{split}
\end{equation}
where $\vv=(x-\cos\theta_0,y-\sin\theta_0,0)$, and
$$\vu=\paren{3\left[(z-z_0)^2-\alpha^2\right](x-\cos\theta_0),3\left[(z-z_0)^2-\alpha^2\right](y-\sin\theta_0),-2g^2(z-z_0)},$$
noting
$$\frac{1}{g}\paren{2-\frac{s}{g}} = \frac{1}{g} - \frac{(z-z_0)^2-\alpha^2}{g^3}$$
in the calculation of $\vr_3$.
$D\Pi_L$ is invertible if and only if the matrix $M=\paren{\vr_1^T,\vr_2^T,\vr_3^T}$ is invertible. We have
$$\vr_1 \times \vr_2 = \frac{2\sigma\paren{|\vx_T|^2+\alpha^2}}{g^4}\paren{-(y-\sin\theta_0),(x-\cos\theta_0),0}.$$
Thus
$$\text{det}(M)=\vr_3\cdot (\vr_1 \times \vr_2 )=-\paren{2-\frac{s}{g}} \cdot \frac{2\sigma^2\paren{|\vx_T|^2+\alpha^2}}{g^5}\paren{(\Theta_0^T,0)\cdot \vx_T},$$
which is zero if and only if $2-\frac{s}{g}=0$ or $(\Theta_0^T,0)\cdot \vx_T=0$. $2-\frac{s}{g}$ was shown earlier to be never zero for points on the lemon, except the singular points, which we do not consider. The plane $\{(\Theta_0^T,0)\cdot \vx_T=0\}$ does not intersect $C_{\epsilon}$,  by convexity of the $C_0$ boundary. Hence $(\Theta_0^T,0)\cdot \vx_T\neq 0$ for $\vx \in C_{\epsilon}$. Thus, $\Pi_L$ is an immersion.
\end{proof}

\begin{remark}
The above theorem shows that any reconstruction artifacts due to the Bolker condition are reflections of the true singularities in planes tangent to $C_0$. Since $f$ is supported on $C_0$, this means, due to the convexity of the $C_0$ boundary, any artifacts due to Bolker must lie in $\mathbb{R}^3\backslash C_0$, and thus do not interfere with the region of interest. If $f$ were supported outside of $C_0$, e.g., if $\text{supp}(f)\subset \Omega \subset \mathbb{R}^3\backslash C_0$, where $\Omega$ is an open subset, then the Bolker condition would fail. This is true since, for any point $\vx\in \mathbb{R}^3\backslash C_0$, we can find a plane tangent to $C_0$ which intersects $\vx$, and thus the immersion part of Bolker would fail for $f$ supported outside of $C_0$.
\end{remark}

\subsection{Analysis of $\Rc$; the general case}
\label{gen_mic}
In this section, we use the following defining function for the lemon surfaces
$$\Psi(\vx; t,R,\theta_0,z_0)=\paren{\sqrt{(x-\cos\theta_0)^2+(y-\sin\theta_0)^2}+R}^2+(z-z_0)^2-t,$$
where $t=p^2$. We use a different definition for $\Psi$ than in section \ref{micro_lim} here for ease of calculation.

Similarly to the last section, for $f\in L_c^2(C_{\epsilon})$, we have the alternate expression for $\Rc$
\begin{equation}
\begin{split}
\mathcal{R}f(t,R,\theta_0,z_0) &= \int_{C_{\epsilon}}\left|\nabla_{\vx}\Psi\right|\delta\paren{\Psi(\vx; t,R,\theta_0,z_0)}f(\vx)\mathrm{d}\vx \\
&=  \int_{-\infty}^{\infty}\int_{C_{\epsilon}}\left|\nabla_{\vx}\Psi\right|e^{i\sigma\Psi(\vx; t,R,\theta_0,z_0)}f(\vx)\mathrm{d}\vx\mathrm{d}\sigma,
\end{split}
\end{equation}
where $\Phi=\sigma\Psi$ is the phase and $a=\left|\nabla_{\vx}\Psi\right|$ is the amplitude. We now have our third main theorem. The proof uses similar ideas to Theorem \ref{micro_R_L}.

\begin{theorem}
\label{micro_R}
$\Rc$ is an elliptic FIO order $-3/2$ which satisfies the Bolker condition. 
\end{theorem}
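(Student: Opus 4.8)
The plan is to follow the scheme of the proof of Theorem~\ref{micro_R_L}, now with the four-dimensional parameter space $Y=\{(t,R,\theta_0,z_0)\}$ and the simpler defining function $\Psi(\vx;t,R,\theta_0,z_0)=(g+R)^2+(z-z_0)^2-t$, where $g=|\vx_T'|$. First I would dispose of the FIO and ellipticity parts exactly as before: $\Phi=\sigma\Psi$ is homogeneous of degree one in $\sigma$; $\mathrm{d}_t\Phi=-\sigma\neq0$ on $\Sigma_\Phi$, so $\mathrm{d}\Phi$ and $\mathrm{d}_\vy\Phi$ are nowhere zero there; the lemons are smooth manifolds away from their self-intersection points, which lie outside $\supp f$, so $\Phi$ is clean; and $\mathrm{d}_\vx\Phi=\sigma\nabla_\vx\Psi$ with $|\nabla_\vx\Psi|=2\sqrt{(g+R)^2+(z-z_0)^2}=2\sqrt{t}$ on $\{\Psi=0\}$, which is bounded below by a positive constant over $C_\epsilon$, so $\Phi$ is nondegenerate and the amplitude $a=|\nabla_\vx\Psi|$ is a smooth, $\sigma$-independent, elliptic symbol of order $0$. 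Definition~\ref{FIOdef} then identifies $\Rc$ as an elliptic FIO of the stated order.

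For the Bolker condition I would parametrize the canonical relation $\Cc$ of $\Rc$ by $(\vx;\sigma,R,\theta_0,z_0)$, with $t=(g+R)^2+(z-z_0)^2$ determined by $\Psi=0$, and write the left projection as
\[
\Pi_L(\vx;\sigma,R,\theta_0,z_0)=\Bigl(t,\,R,\,\theta_0,\,z_0,\,-\sigma,\,2\sigma(g+R),\,\tfrac{2\sigma(g+R)(\Theta_0^\perp\cdot\vx_T')}{g},\,-2\sigma(z-z_0)\Bigr),
\]
where $\Theta_0=(\cos\theta_0,\sin\theta_0)^T$ and $\Theta_0^\perp=(\sin\theta_0,-\cos\theta_0)^T$; compared to Theorem~\ref{micro_R_L} the one new feature is the extra cotangent slot $\mathrm{d}_R\Phi=2\sigma(g+R)$ coming from the variable $R$. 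For injectivity, suppose $\vx_1,\vx_2\in C_\epsilon$ give the same value of $\Pi_L$ with the same $\sigma,R,\theta_0,z_0$. Matching the $\mathrm{d}_R\Phi$ entries and using $\sigma\neq0$ gives $g_1=g_2$; matching the $\mathrm{d}_{z_0}\Phi$ entries gives $z_1=z_2$; so $\vx_1$ and $\vx_2$ lie on a common horizontal circle of the lemon. Since $2\sigma(g+R)\neq0$, matching the $\mathrm{d}_{\theta_0}\Phi$ entries forces the values of $\Theta_0^\perp\cdot\vx_T'$ at $\vx_1$ and $\vx_2$ to agree, and together with $g_1=g_2$ this leaves only two possibilities: $\vx_1=\vx_2$, or $\vx_1$ and $\vx_2$ are reflections of one another in the plane $\{(\Theta_0^T,0)\cdot\vx_T=0\}$ tangent to $\partial C_0$ at $(\cos\theta_0,\sin\theta_0,0)$. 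In the reflected case one of the two points satisfies $(\Theta_0^T,0)\cdot\vx_T\geq0$ and hence lies outside $C_0\supset C_\epsilon$ by convexity, a contradiction; so $\Pi_L$ is injective on $C_\epsilon$.

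The immersion claim is the one place where the argument genuinely differs from Theorem~\ref{micro_R_L}, because $D\Pi_L$ is now a non-square $8\times7$ matrix and there is no determinant to take. Instead I would observe that the four rows of $D\Pi_L$ attached to $R$, $\theta_0$, $z_0$ and $\mathrm{d}_t\Phi=-\sigma$ vanish in the three $\nabla_\vx$ columns and form an invertible $4\times4$ block in the $(\partial_\sigma,\partial_R,\partial_{\theta_0},\partial_{z_0})$ columns, so that $\operatorname{rank}D\Pi_L=4+\operatorname{rank}M$, where $M$ is the $4\times3$ matrix with rows $\nabla_\vx t$, $\nabla_\vx(\mathrm{d}_R\Phi)$, $\nabla_\vx(\mathrm{d}_{\theta_0}\Phi)$, $\nabla_\vx(\mathrm{d}_{z_0}\Phi)$; I need $\operatorname{rank}M=3$. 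A direct computation gives $\nabla_\vx(\mathrm{d}_R\Phi)=\tfrac{2\sigma}{g}\vv$ and $\nabla_\vx(\mathrm{d}_{z_0}\Phi)=-2\sigma\,\ve_3$, with $\vv=(x-\cos\theta_0,y-\sin\theta_0,0)$, and these span the plane $P=\mathrm{span}\{\vv,\ve_3\}$, while $\nabla_\vx t=\tfrac{2(g+R)}{g}\vv+2(z-z_0)\ve_3$ already lies in $P$. Thus $\operatorname{rank}M=3$ exactly when $\nabla_\vx(\mathrm{d}_{\theta_0}\Phi)$ has a nonzero component along $P^\perp=\mathrm{span}\{\vv^\perp\}$, $\vv^\perp=(-(y-\sin\theta_0),x-\cos\theta_0,0)$; computing that component yields $-2\sigma\bigl(1+\tfrac{R}{g}\bigr)(\Theta_0\cdot\vx_T')$, which (using $\sigma\neq0$, $g>0$, $R\geq0$) is nonzero precisely when $(\Theta_0^T,0)\cdot\vx_T\neq0$ --- and that holds throughout $C_\epsilon$ by convexity of $C_0$, exactly as in the injectivity step. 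Hence $D\Pi_L$ has full rank $7$, $\Pi_L$ is an immersion, and combined with injectivity it is an embedding, so $\Rc$ satisfies the Bolker condition.

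The only step that needs real care is the immersion argument: since $Y$ has one more dimension than $X$, the Jacobian $D\Pi_L$ is rectangular and the determinant shortcut of Theorem~\ref{micro_R_L} is unavailable, so one must pass through the block decomposition above and reduce everything to the rank of the single $4\times3$ matrix $M$. The remaining steps --- homogeneity, cleanness, nondegeneracy of $\Phi$, the order count, and the explicit gradients --- are a direct transcription of the corresponding steps for $\Rc_L$.
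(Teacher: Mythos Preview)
Your proof is correct and follows the same overall plan as the paper: verify the FIO and ellipticity axioms, then check injectivity and the immersion property of $\Pi_L$ separately, with both reducing to the nonvanishing of $(\Theta_0^T,0)\cdot\vx_T$ on $C_\epsilon$. The only real difference is in the immersion step. You argue geometrically that three of the four rows of the $4\times3$ gradient block lie in the plane $P=\mathrm{span}\{\vv,\ve_3\}$ and then compute the $\vv^\perp$-component of the remaining row $\nabla_\vx(\mathrm d_{\theta_0}\Phi)$; the paper instead simply selects the $3\times3$ minor formed by the rows $\nabla_\vx(\mathrm d_{\theta_0}\Phi)$, $\nabla_\vx t$, $\nabla_\vx(\mathrm d_{z_0}\Phi)$ and evaluates its determinant directly. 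So the ``determinant shortcut'' you say is unavailable is in fact still used in the paper, just on a $3\times3$ submatrix of the rectangular $D\Pi_L$ rather than on $D\Pi_L$ itself, and both computations land on the same nonvanishing condition $\Theta_0\cdot\vx_T'\neq0$. Your geometric version has the small advantage of making transparent \emph{why} only the $\mathrm d_{\theta_0}\Phi$ row matters (it is the only one with a component transverse to $P$), while the paper's minor computation is slightly quicker.
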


\begin{proof}
We first prove $\Rc$ is an FIO. $\Phi=\sigma\Psi$ is homogeneous order one, since $\Psi$ does not depend on $\sigma$. $\mathrm{d}_t\Phi = -\sigma \neq 0$, hence $\mathrm{d}\Phi, \mathrm{d}_{\vy}\Phi \neq \mathbf{0}$.  $\Phi$ is clean for the same reasons as in the proof of theorem \ref{micro_R_L}.

Let $\ve_3 = (0,0,1)$, and let $\vv=(x-\cos\theta_0,y-\sin\theta_0,0)$. Then, we have
\begin{equation}
\begin{split}
\nabla_{\vx}\Phi&=2\sigma \paren{-\frac{(g+R)}{g^3} \cdot \vv+(z-z_0)\ve_3},
\end{split}
\end{equation}
where $g$ is defined as in the proof of Theorem \ref{micro_R_L}, and,
\begin{equation}
\left|\nabla_{\vx}\Phi\right| = 2\sigma\sqrt{ \frac{(g+R)^2}{g^4} + (z-z_0)^2 } > 0.
\end{equation}
Thus, $\mathrm{d}_{\vx} \Phi \neq \mathbf{0}$, and $\Phi$ is non-degenerate. Further, $a = \left|\nabla_{\vx}\Phi\right| > 0$, and hence $a$ is an elliptic symbol. $a$ is order zero since it is smooth and does not depend on $\sigma$. Hence, by Definition \ref{FIOdef}, $\Rc$ is an elliptic FIO order $\mathcal{O}(\Rc) = 0 +\frac{1}{2} - \frac{4}{2} = -\frac{3}{2}$. 

Let $Y_2 = \dot{\mathbb{R}} \times (0,\infty) \times [0,2\pi] \times \mathbb{R}$. The left projection $\Pi_L : C_{\epsilon} \times Y_2 \to \Pi_L\paren{C_{\epsilon} \times Y_2}$ of $\Rc$ is defined
\begin{equation}
\begin{split}
\label{pi_L_1}
\Pi_L(\vx; \sigma,R,\theta_0,z_0)&=\Bigg(\overbrace{-\sigma}^{\mathrm{d}_t\Phi},R,\theta_0,z_0,\overbrace{\paren{g+R}^2+(z-z_0)^2}^{t},\\
&\ \ \ \  \ \ \ \ \  \ \ \ \ \ \  \ \ \ \ \underbrace{2\sigma\paren{g+R}}_{\mathrm{d}_{R}\Phi},\underbrace{2\sigma\paren{\Theta_0^{\perp}\cdot\vx'_T}\paren{1+\frac{R}{g}}}_{\mathrm{d}_{\theta_0}\Phi}, \underbrace{2\sigma(z-z_0)}_{\mathrm{d}_{z_0}\Phi}\Bigg).
\end{split}
\end{equation}
As in the proof of Theorem \ref{micro_R_L}, we split the remainder of the proof into injectivity and immersion parts.\\
\\
\textbf{Injectivity.}
Let $\Pi_L(\vx_1; \sigma,R,\theta_0,z_0)=\Pi_L(\vx_2; \sigma,R,\theta_0,z_0)$. Then $z_1=z_2$, and $g_1=g_2$. Since $1+\frac{R}{g}>0$, $\Theta_0^{\perp}\cdot(x_1-\cos\theta_0, y_1-\sin\theta_0)^T=\Theta_0^{\perp}\cdot(x_2-\cos\theta_0, y_2-\sin\theta_0)^T$. Thus, as in the proof of Theorem \ref{micro_R_L}, $\vx_1$ and $\vx_2$ are the reflections of one another in the plane $\{(\Theta_0^T,0)\cdot\vx_T=0\}$, or they are equal. We conclude that $\Pi_L$ is injective, given the convexity of the boundary of $C_0$. \\
\\
\textbf{Immersion.}
Let $I'_{4\times 4} = \text{diag}\paren{-1,1,1,1}$. Then, the differential of $\Pi_L$ is
\begin{equation}\label{DPiA_1}
D\Pi_L=\kbordermatrix {&\mathrm{d}\sigma, \mathrm{d}R,\mathrm{d}\theta_0,\mathrm{d}z_0 & \nabla_\vx \\
-\sigma,R,\theta_0,z_0 & I'_{4\times 4} & \textbf{0}_{4 \times 3} \\
\mathrm{d}_{\theta_0} \Phi & \cdot & \vr_1 \\
t & \cdot & \vr_2 \\
\mathrm{d}_{z_0} \Phi & \cdot & \vr_3 \\ 
{\mathrm{d}_{R} \Phi} & \cdot & \vr_4 }.
\end{equation}
The $\vr_i$, for $1\leq i \leq 4$, are defined
\begin{equation}
\vr_1 = \nabla_{\vx}\paren{ \mathrm{d}_{\theta_0} \Phi } = 2\sigma\paren{-\frac{R}{g^3}\paren{\Theta_0^{\perp}\cdot\vx'_T} \vv + \paren{1+\frac{R}{g}}\paren{\Theta_0^{\perp},0} },
\end{equation}
and
\begin{equation}
\vr_2 = \nabla_{\vx}\paren{ t } = 2\paren{-\frac{(g+R)}{g^3} \cdot \vv+(z-z_0)\ve_3},
\end{equation}
$\vr_3 = \nabla_{\vx}\paren{ \mathrm{d}_{z_0} \Phi } = 2\sigma \ve_3$, and $\vr_4 = \nabla_{\vx}\paren{ \mathrm{d}_{R} \Phi } = -2\sigma \frac{\vv}{g^3}$
Let $M = \left[\vr_1^T,\vr_2^T,\vr_3^T\right]$.
Then $\text{det} M \neq 0 \implies \text{det} D\Pi_L \neq 0$. Let $a = \frac{R}{g^3}\paren{\Theta_0^{\perp}\cdot\vx'_T}$, $b=1+\frac{R}{g}$, and $c = -\frac{g+R}{g^3}$. Then,
\begin{equation}
\begin{split}
\text{det} M &= \text{det} \begin{pmatrix}
2\sigma \big( a(x-\cos\theta_0) + b\sin\theta_0 \big) &  2\sigma \big( a(y-\sin\theta_0) - b\cos\theta_0 \big) & 0  \\
2 c(x-\cos\theta_0) & 2 c(y-\sin\theta_0) & 2 (z-z_0)\\
0 & 0 & 2\sigma
\end{pmatrix}\\
&= 8\sigma^2 \cdot bc \left[ \cos\theta_0(x-\cos\theta_0)+\sin\theta_0(y-\sin\theta_0) \right]\\
&= -8\sigma^2 \cdot \frac{\paren{g+R}^2}{g^4}\left[ \paren{\Theta_0^T,0} \cdot \vx - 1 \right],
\end{split}
\end{equation}
which is non-zero unless $\paren{\Theta_0^T,0} \cdot \vx = 1$. This happens on planes tangent to the boundary of $C_0$, which do not intersect $C_0$. Thus, $\Pi_L$ is an immersion.
\end{proof}

\subsection{Visible singularities and edge detection}
\label{visible}

In this section, we investigate the edge detection capabilities of $\mathcal{R}$ and $\Rc_L$ using microlocal analysis. See \cite{krishnan2015microlocal} for similar work, where the authors present a microlocal analysis of the classical hyperplane Radon transform, commonly applied in X-ray CT. 

Let $(\vx,\vxi) \in \text{WF}(f)$ be a point in the wavefront set (edge map) of $f$, where $\vx \in C_0$ is the location of the edge, and $\vxi \in S^2$ is the direction, where $S^2$ is the unit sphere in $\mathbb{R}^3$. Edges which are intersected by, and in directions normal to a lemon in our data set are detectable, or visible. Otherwise, we say the edge is undetectable, or invisible.
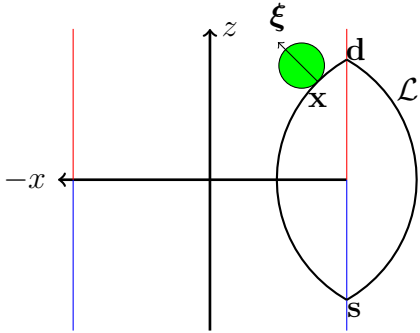
\begin{figure}[!h]
\centering
\begin{tikzpicture}[scale=0.4]
\draw [fill=green] (3.0170,3.7830) circle (0.75);
\coordinate (X) at (3.5473,3.2527);
\coordinate (xi) at (2.3099,4.4901);
\draw [->] (3.5473,3.2527)node[below] {$\vx$}--(2.2392,4.5608) node[above]{$\vxi$};
\draw [<-,line width=1pt] (-5,0)node[left] {$-x$}--(4.5,0);
\draw [->,line width=1pt] (0,-5)--(0,5)node[right] {$z$};
\coordinate (x) at (3,2.6458);
\coordinate (O) at (0,0);
\coordinate (c) at (4.5,0);
\draw [blue] (4.5,-5)--(4.5,0);
\draw [red] (4.5,0)--(4.5,5);
\draw [blue] (-4.5,-5)--(-4.5,0);
\draw [red] (-4.5,0)--(-4.5,5);
\draw [thick,domain=120:240] plot ({6.8+4.6*cos(\x)}, {4.6*sin(\x)});
\draw [thick,domain=-60:60] plot ({-6.8+2*4.5+4.6*cos(\x)}, {4.6*sin(\x)});
\node at (4.75,-4.3) {$\vs$};
\node at (4.8,4.35) {$\vd$};
\node at (6.5,3) {$\mathcal{L}$};
\coordinate (D) at (4.5,4);
\coordinate (w) at (2.3,1);
\coordinate (a) at (1.8,2);
\end{tikzpicture}
\caption{An edge, at position $\vx$ in direction $\vxi$, on the boundary of a green disc, normal to a lemon, $\mathcal{L}$.}
\label{fig3}
\end{figure}
See figure \ref{fig3}. The highlighted edge at $\vx$ in direction $\vxi$ is normal to $\mathcal{L}$ and is thus detectable. If an edge is detectable, then it can be stably reconstructed. Edges which are not detectable are invisible to the data and cannot be recovered stably \cite{krishnan2015microlocal}, without sufficient a-priori information regarding the edge map of $f$. Thus, the edge detection capabilities of $\mathcal{R}$ and $\Rc_L$ give insight into the inversion stability.
\begin{figure}[!h]
\centering
\begin{subfigure}{0.32\textwidth}
\includegraphics[width=0.9\linewidth, height=4cm, keepaspectratio]{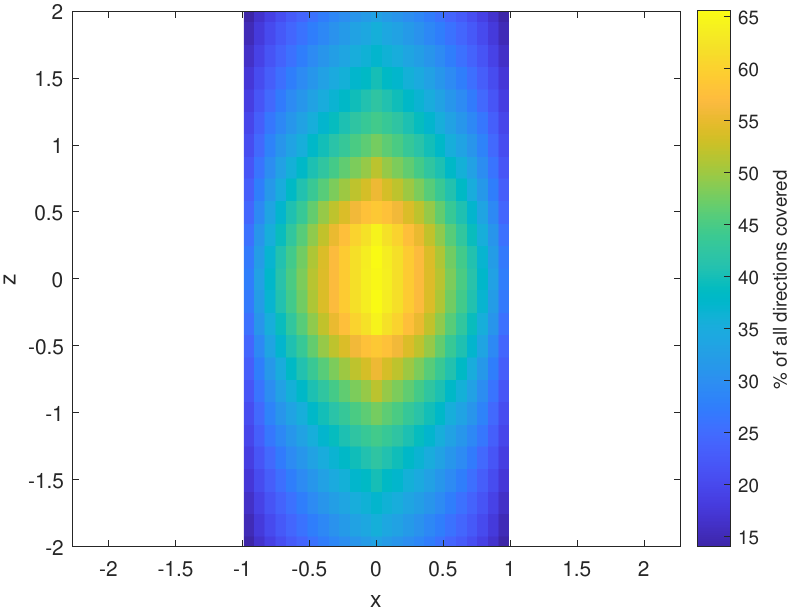}
\subcaption{$\Rc_L f$ edge detection, with $\alpha = 2$} \label{Fvis_2a}
\end{subfigure}
\begin{subfigure}{0.32\textwidth}
\includegraphics[width=0.9\linewidth, height=4cm, keepaspectratio]{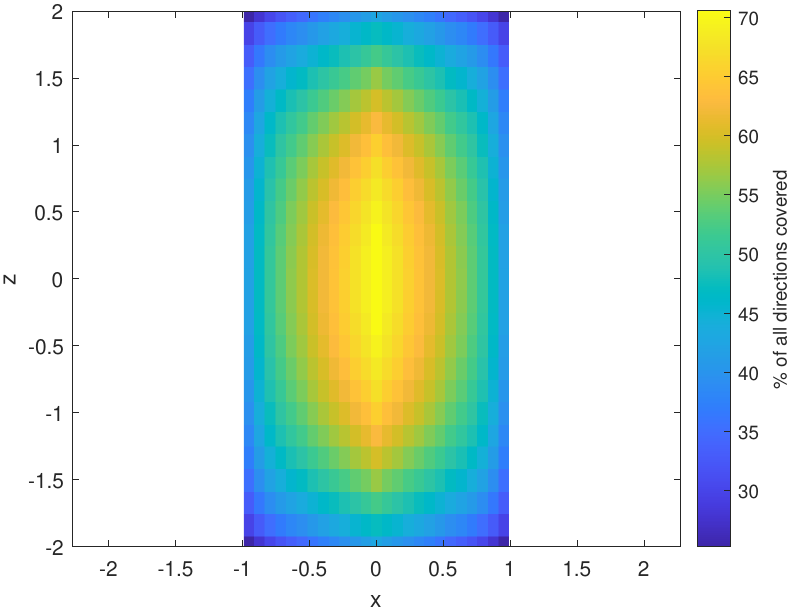} 
\subcaption{$\Rc f$ edge detection} \label{Fvis_2b}
\end{subfigure}
\caption{Visible singularities within $C_0$, cropped to height 4. 
}
\label{Fvis_2}
\end{figure}

We now quantify the edge detection capabilities of $\mathcal{R}$ and $\Rc_L$. See figure \ref{Fvis_2}, where we have plotted the distribution of detectable edges within $C_0$. Similar plots are presented in \cite[section 4.2]{webber2022ellipsoidal} relating to a spheroid Radon transform. For each point, $\vx \in C_0$, we measure the proportion of directions $\vxi$ on the whole sphere surrounding $\vx$ which can be detected by lemon integral data. We compare the edge detection capabilities of $\mathcal{R}$ and $\mathcal{R}_L$. The percentage of detectable edges on the colorbars in figures \ref{Fvis_2a} and \ref{Fvis_2b} ranges from 0 to 100, with larger values indicating greater edge detection, and vice-versa. In figures \ref{Fvis_2a} and \ref{Fvis_2b}, the proportion of detectable edges is greater towards the cylinder center and this reduces significantly near the boundary of $C_0$ and towards the corners of the image.
We can see that $\Rc$ detects more edges than $\Rc_L$, as the values on the colorbar in figure \ref{Fvis_2b} are closer to $100\%$ when compared to those in figure \ref{Fvis_2a}, and more edges are covered away from the cylinder center, e.g., the yellow region in figure \ref{Fvis_2b} is larger than the yellow region in figure \ref{Fvis_2a}. This makes the inversion of $\Rc$ more stable, when compared to $\Rc_L$. As discussed earlier near the start of section \ref{section_spindle}, to measure $\Rc f$, we require multiple rings of detectors and sources. To measure $\Rc_L f$, we only need one source ring and one detector ring. This makes the measurement of $\Rc_L f$ cheaper, more efficient, and more practical, when compared to $\Rc f$, at the cost of reduced inversion stability and less data.

In figures \ref{Fvis_2a} and \ref{Fvis_2b}, nowhere in $C_0$ are $100\%$ of edges detectable. The distribution of missing edges is also non-uniform, and the missing edges are concentrated near directions parallel to the $z$ axis. This is due to the nature of the cylindrical scanner, and the specific geometry of the lemon surfaces. For example, for any $\vx \in C_0$ and direction $\vxi$, parallel to $(0,0,1)$, there does not exist a lemon in our data set (either $\Rc_L f$ or $\Rc f$) which intersects $\vx$ normal to $\vxi$, and thus the problem is one of limited-angle tomography \cite{krishnan2015microlocal}. If the inversion of $\mathcal{R}$ and $\Rc_L$ is not sufficiently regularized, we will likely see a blurring effect in the reconstruction, particularly near edges with direction parallel to, or a small angle from the $z$ direction. Similar blurring effects are observed, e.g., in conventional limited-angle X-ray CT \cite{krishnan2015microlocal}.

\section{Image reconstructions}
\label{results}
In this section, we present simulated reconstructions from $R_L f$ data when $\alpha = 2$ and $C_0$ is cropped to have height 4 as in section \ref{visible}. We show reconstructions of delta functions, using the Landweber method, to validate our microlocal theory. We then present simulated reconstructions of image phantoms with added Gaussian noise. We first explain our data simulation and the image reconstruction methods used.

\subsection{Reconstruction methods and data simulation}
\label{recon_mthd}
In this subsection, we explain how we simulate data and detail the image reconstruction methods used.
Let $A$ denote the discretized $R_L$ operator, let $\vx$ denote a vectorized image, and let $\vb$ be the measured data. Then, we simulate $\vb$ using
\begin{equation}
\vb = A\vx + \gamma \times \frac{\|A\vx\|_2}{\sqrt{l}} \mathbf{\eta},
\end{equation}
where $l$ is the length of $A\vx$, $\mathbf{\eta} \sim \mathcal{N}(0,1)$ is a vector, size $l\times 1$, of draws from a standard Gaussian, and $\gamma$ is a parameter which controls the noise level. Then, $\|A\vx-\vb\|/\|A\vx\| \approx \gamma$, for large $l$. The image resolution used throughout this section is $41\times 41\times 41$, and we simulate $l = 26691$ lemon integrals to reconstruct $\vx$. Specifically, $\Rc_L f(h,\theta_0,z_0)$ is sampled for $(h,\theta_0,z_0)\in [0,2]\times [0,2\pi]\times [-3,3]$ on a meshgrid with $l$ pixels. $A$ has size $26691 \times 41^3$, and is underdetermined. $A$ is also sparse, and we store $A$ as a sparse matrix.

To recover $\vx$ from $\vb$, we use the Landweber method and a Conjugate Gradient Least Squares (CGLS) and TV hybrid algorithm which are detailed below:
\begin{enumerate}
\item Landweber - to implement the Landweber method \cite{landweber1951iteration}, we apply the code provided in \cite{hansen2018air}, with the default settings.
\item CGLS-TV hybrid - we use a combination of the Non-Negative CGLS (NNCGLS) code of \cite{gazzola2019ir} and the 3-D TV denoising code of \cite{perraudin2014unlocbox}; specifically we use the Matlab function ``prox\_tv3d" provided in \cite{perraudin2014unlocbox} to implement TV smoothing. We run $m_1$ iterations of NNCGLS, followed by $m_2$ iterations of TV smoothing. We then repeat the whole process $m$ times to obtain a reconstruction.
\end{enumerate}
The Landweber method offers modest regularization, and is included to validate our microlocal theory and highlight the artifacts we expect to see if the solution is not sufficiently regularized. The CGLS-TV hybrid method is included to show the performance of more powerful regularizers, such as TV and non-negativity. The assumption of non-negativity is appropriate, as the reconstruction target in CST is an electron density, which is non-negative. To quantify method performance, we use the relative least squares error
\begin{equation}
\epsilon_r = \frac{\|\vx-\vx_{\epsilon}\|_2}{\|\vx\|},
\end{equation}
where $\vx$ is the ground truth, and $\vx_{\epsilon}$ is a reconstruction. In the implementation of CGLS-TV, we choose the TV smoothing parameter so as to minimize $\epsilon_r$. We choose the number of Landweber iterations in the same way.

\begin{figure}[!h]
\centering
\begin{subfigure}{0.4\textwidth}
\includegraphics[width=1.2\linewidth, height=4.5cm, keepaspectratio]{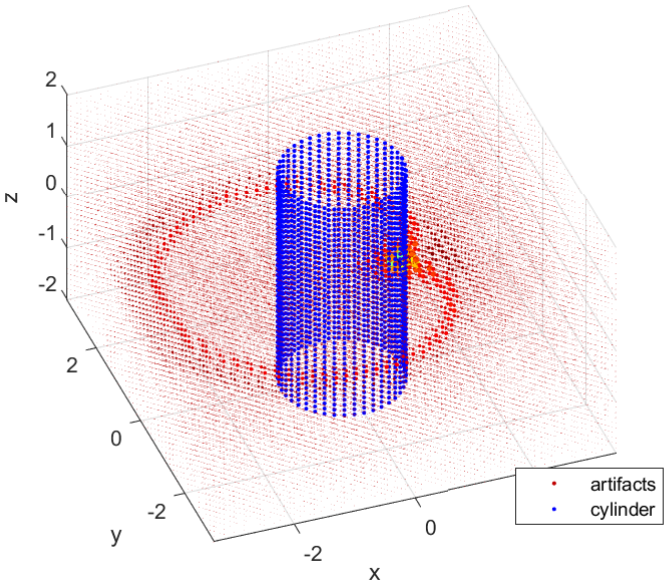} 
\subcaption{observed artifacts, 3-D} \label{bolker_3D_a}
\end{subfigure}
\begin{subfigure}{0.421\textwidth}
\includegraphics[width=1.2\linewidth, height=4.5cm, keepaspectratio]{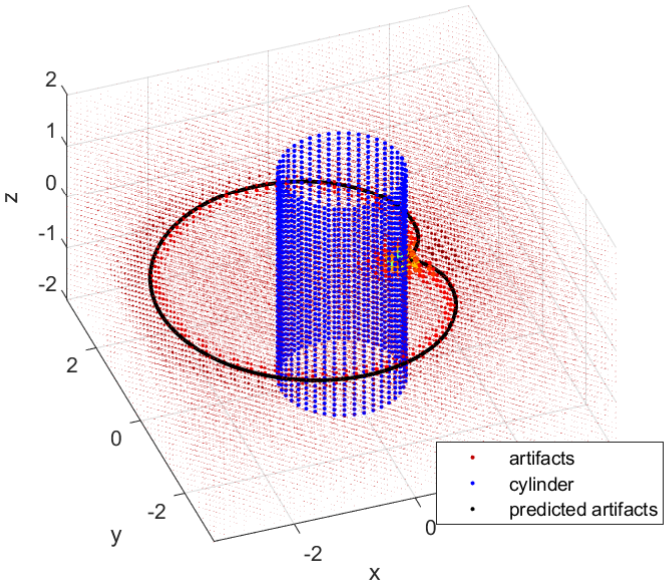}
\subcaption{predicted artifacts, 3-D} \label{bolker_3D_b}
\end{subfigure}
\begin{subfigure}{0.4\textwidth}
\includegraphics[width=1.2\linewidth, height=4.5cm, keepaspectratio]{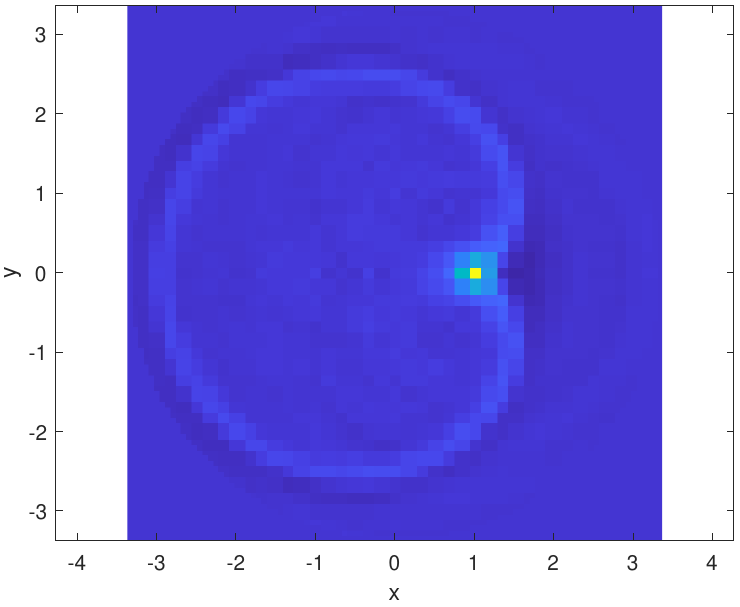} 
\subcaption{observed artifacts, 2-D} \label{bolker_2D_a}
\end{subfigure}
\begin{subfigure}{0.421\textwidth}
\includegraphics[width=1.2\linewidth, height=4.65cm, keepaspectratio]{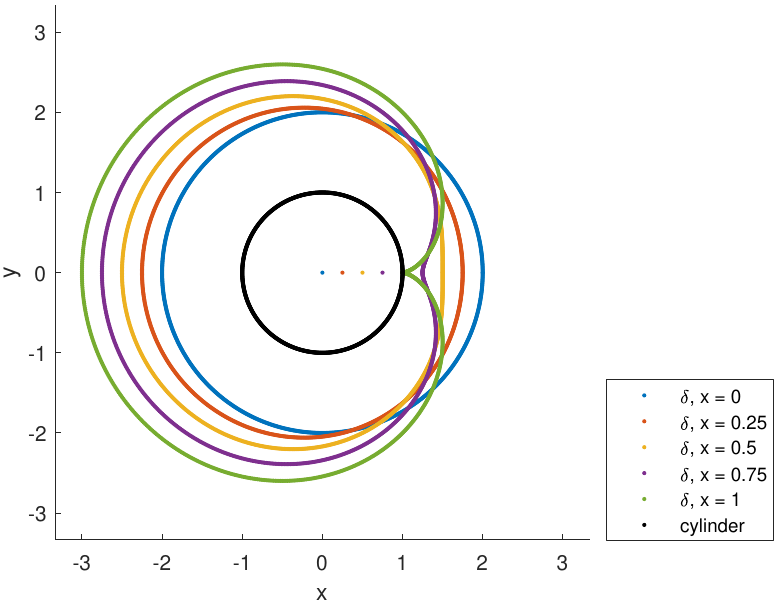}
\subcaption{predicted artifacts, 2-D} \label{bolker_2D_b}
\end{subfigure}
\caption{Comparison of observed and predicted artifacts due to Bolker. (A) - 3-D reconstruction of a delta function at position $\vx = (1,0,0)$ using the Landweber method. (B) - predicted artifacts due to Bolker based on Theorem \ref{micro_R_L}. The black curve represents the artifacts predicted by our theory, which passes directly through the observed red artifact curve in a cardioid shape. (C) - $(x,y)$ plane cross-sectional slice of the image in (A). (D) - predicted artifacts due to Bolker in $(x,y)$ plane. The dots in (D) are delta function positions. We show the predicted artifact curves for varying delta positions. The color of the artifact curve matches the color of the dot where the delta function is located. The cardiod curve in the reconstruction in (C) matches with the green predicted curve in (D), which corresponds to the delta function with position $\vx = (1,0,0)$.}
\label{bolker_3D}
\end{figure}

\subsection{Example artifacts due to Bolker}
In this subsection, we present reconstructions of a delta function to validate our microlocal theory. The noise level is set to $\gamma = 0$, in this subsection. The delta function is positioned at $\vx = (1,0,0)$, on the boundary of $C_0$, and reconstructed using the Landweber method. See figure \ref{bolker_3D}. 
In figure \ref{bolker_3D_a}, we show a three-dimensional image reconstruction of a delta function at position $\vx = (1,0,0)$. The red dots represent the delta function reconstruction, and the blue dots show the boundary of $C_0$. The sizes of the red dots correspond to image density, with larger dots indicating higher density, and vice-versa. We see a high density around $\vx = (1,0,0)$, at the location of the delta function, and also along a cardoid curve which wraps around $C_0$. In figure \ref{bolker_3D_b}, we show the artifacts due to Bolker predicted by our theory. The predicted artifact curve is drawn in black and passes directly through the observed artifact curve in figure \ref{bolker_3D_a}. In figure \ref{bolker_2D_a}, we show the $(x,y)$ plane cross-sectional slice of the image in figure \ref{bolker_3D_a}. In figure \ref{bolker_2D_b}, we show the predicted artifact curves for varying delta function positions in the $(x,y)$ plane. The green curve in figure \ref{bolker_2D_b}, which corresponds to delta position $\vx = (1,0,0)$, matches with the observed artifacts in figure \ref{bolker_2D_a}.

In Theorem \ref{micro_R_L}, we showed that any artifacts due to Bolker are reflections of the true singularities in planes tangent to the boundary of $C_0$. If we reflect $(1,0,0)$, i.e., the location of the delta function, through every plane tangent to the boundary of $C_0$, this forms a cardiod, and this is the reason for the cardiod shape observed in figure \ref{bolker_3D}. For another example, when the delta function is located at $\vx = (0,0,0)$, the corresponding artifact curve is a circle radius 2, center $(0,0,0)$, as shown in figure \ref{bolker_2D_b}. All artifacts due to Bolker lie outside of $C_0$, as predicted by our theory, and thus do not interfere with the scanning region.

\begin{figure}[!h]
\hspace{1.4cm}
\begin{subfigure}{0.421\textwidth}
\includegraphics[width=1.2\linewidth, height=4.5cm, keepaspectratio]{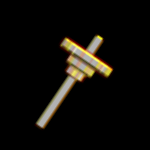} 
\subcaption*{} \label{GT_3D_a}
\end{subfigure}
\begin{subfigure}{0.421\textwidth}
\includegraphics[width=1.2\linewidth, height=4.5cm, keepaspectratio]{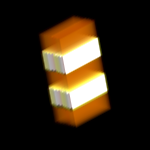}
\subcaption*{} \label{GT_3D_b}
\end{subfigure}
\begin{subfigure}{0.43\textwidth}
\includegraphics[width=1.2\linewidth, height=4.5cm, keepaspectratio]{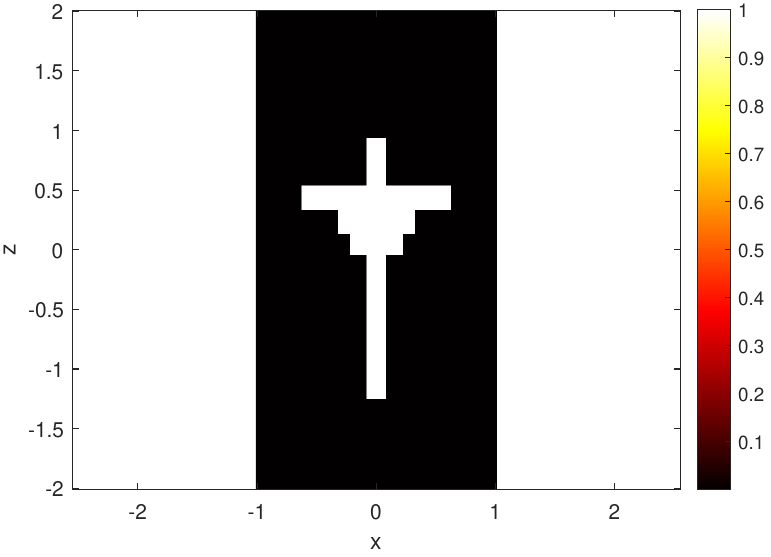} 
\subcaption*{spin top} \label{GT_2D_a}
\end{subfigure}
\begin{subfigure}{0.43\textwidth}
\includegraphics[width=1.2\linewidth, height=4.5cm, keepaspectratio]{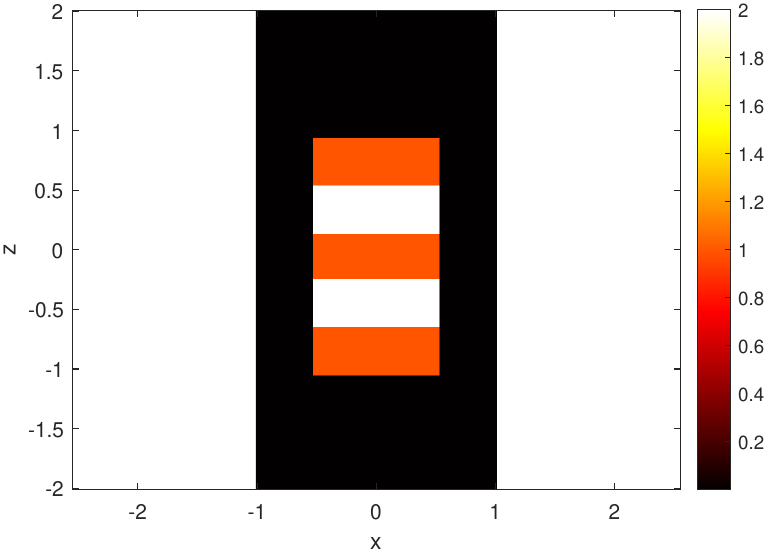}
\subcaption*{layered bricks} \label{GT_2D_b}
\end{subfigure}
\caption{Image phantoms. Top row - 3-D renderings. Bottom row - $(x,z)$ plane slices.}
\label{GT_3D}
\end{figure}
\begin{figure}[!h]
\centering
\hspace{1.4cm}
\begin{subfigure}{0.421\textwidth}
\includegraphics[width=1.2\linewidth, height=4.5cm, keepaspectratio]{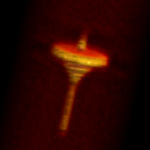} 
\subcaption*{}
\end{subfigure}
\begin{subfigure}{0.421\textwidth}
\includegraphics[width=1.2\linewidth, height=4.5cm, keepaspectratio]{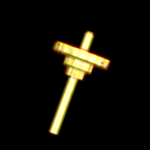}
\subcaption*{}
\end{subfigure}\\
\hspace{1.4cm}
\begin{subfigure}{0.421\textwidth}
\includegraphics[width=1.2\linewidth, height=4.5cm, keepaspectratio]{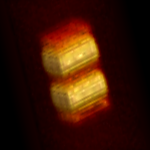} 
\subcaption*{\hspace{-2cm} Landweber}
\end{subfigure}
\begin{subfigure}{0.421\textwidth}
\includegraphics[width=1.2\linewidth, height=4.5cm, keepaspectratio]{ 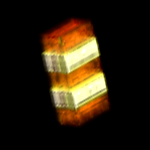}
\subcaption*{\hspace{-2cm} CGLS-TV}
\end{subfigure}
\caption{3-D image reconstructions with $0.1\%$ added Gaussian noise. Top row - spin top reconstruction. Bottom row - layered brick reconstructions.}
\label{F_r1}
\end{figure}
\begin{figure}[!h]
\centering
\begin{subfigure}{0.43\textwidth}
\includegraphics[width=1.2\linewidth, height=4.5cm, keepaspectratio]{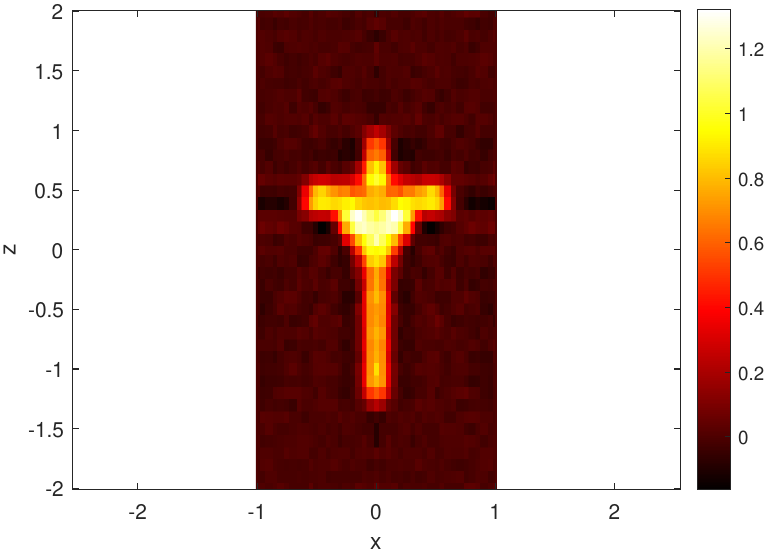} 
\end{subfigure}
\begin{subfigure}{0.43\textwidth}
\includegraphics[width=1.2\linewidth, height=4.5cm, keepaspectratio]{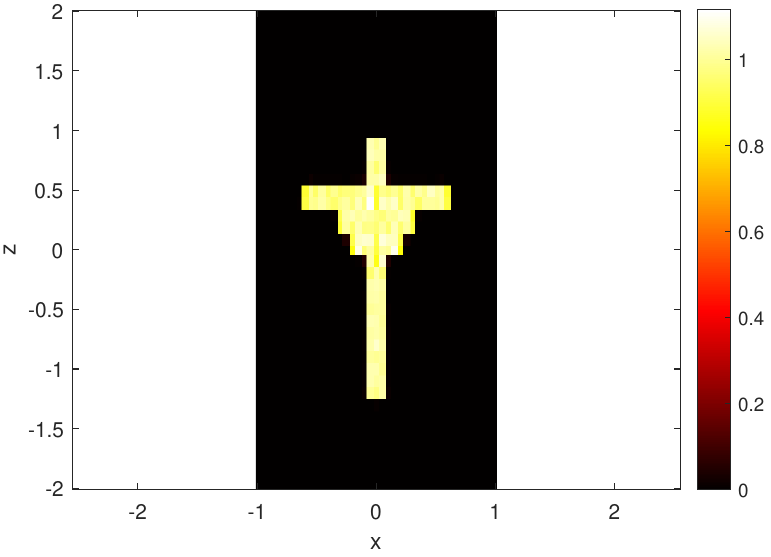}
\end{subfigure}
\begin{subfigure}{0.43\textwidth}
\includegraphics[width=1.2\linewidth, height=4.5cm, keepaspectratio]{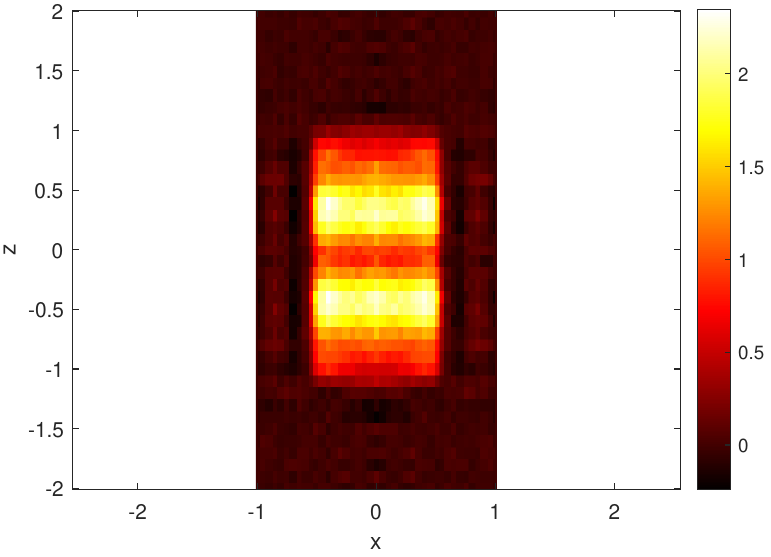} 
\subcaption*{Landweber}
\end{subfigure}
\begin{subfigure}{0.43\textwidth}
\includegraphics[width=1.2\linewidth, height=4.5cm, keepaspectratio]{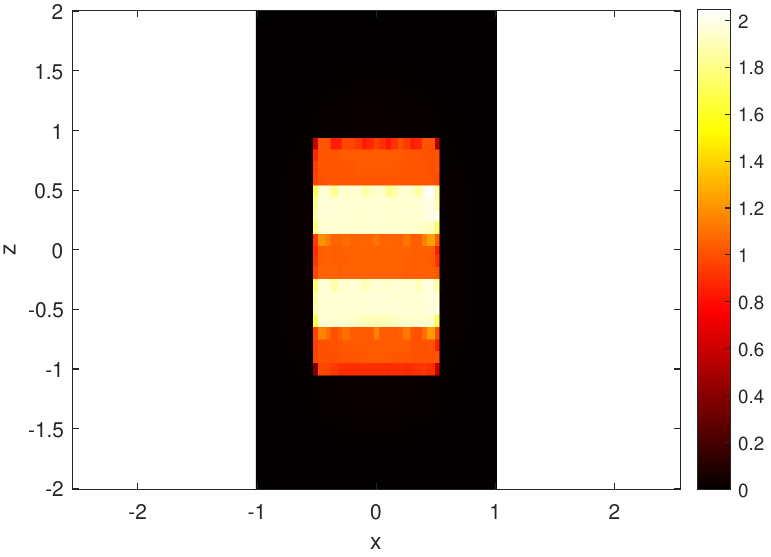}
\subcaption*{CGLS-TV}
\end{subfigure}
\caption{$(x,z)$ plane slices corresponding to the reconstructions of figure \ref{F_r1}. Top row - spin top reconstructions. Bottom row - layered brick reconstructions.}
\label{F_r2}
\end{figure}

\subsection{Phantom reconstructions}
\label{phantom_results}
In this subsection, we present simulated reconstructions of image phantoms using iterative methods, as detailed in subsection \ref{recon_mthd}. See figure \ref{GT_3D}, where we show two test phantoms. The first test phantom, shown on the left-hand of figure \ref{GT_3D}, is in the form of a spin top and is rotationally symmetric about the $z$ axis. The spin top has density 1, and the background has density 0. The second test phantom, shown on the right-hand of figure \ref{GT_3D}, is composed of layered bricks, with densities alternating between 1 and 2. The background of the layered brick phantom has density 0, as with the spin top phantom.

In figures \ref{F_r1} and \ref{F_r2}, we show reconstructions of the spin top and layered brick phantoms using the Landweber and CGLS-TV methods, when the noise level is set to $\gamma = 0.001$. In figures \ref{F_r1} and \ref{F_r2}, we show 3-D renderings  and $(x,z)$ plane cross sections of the image reconstructions. In table \ref{Tr1}, we tabulate the least squares error values, $\epsilon_r$, for varying levels of added noise $\gamma$. 

The Landweber reconstructions on the left-hand of figures \ref{F_r1} and \ref{F_r2} are blurry and corrupted by noise. This is also reflected in the least squares error values in table \ref{Tr1}. For example, the least squares error offered by Landweber on the spin top phantom is $43\%$. We see a significant blurring effect near the boundaries of the phantoms, which is particularly emphasized near edges in directions parallel to the $z$ axis. The layered brick phantom has jump discontinuities in the $x$, $y$, and $z$ directions. In the bottom left-hand subfigure of figure \ref{F_r2}, the oscillating layers in the $z$ direction are heavily blurred together, and the edges are not well resolved. The edges in the $x$ direction, while still slightly blurred, are better resolved than those in the $z$ direction. In section \ref{visible}, we showed that there are parts of the image edge map which are not visible in $\Rc_L f$ data, and that the missing edges are concentrated near directions parallel to the $z$ axis. 
This explains why we see a strong blurring effect in the reconstruction near edges with direction parallel to, or a small angle from the $z$ direction.

The CGLS-TV reconstructions are shown in the right-hand of figures \ref{F_r1} and \ref{F_r2}. CGLS-TV offers better image quality, when compared to the Landweber method, and TV suppresses the noise and blurring due to limited angles. The least squares error scores are also significantly improved. For example, the least squares error offered by CGLS-TV on the spin top phantom is $4\%$, which is $39\%$ lower than the $43\%$ error offered by the Landweber method. 
\begin{table}[!ht]
    \begin{subtable}{.49\linewidth}\centering
{ \begin{tabular}{|l|l|l|l|l|}
    \hline
        method & $\gamma = 0.001$ & $\gamma = 0.01$ & $\gamma = 0.05$ \\ \hline
        Landweber & 0.43 & 0.46 & 0.49 \\ \hline
        CGLS-TV & 0.04 & 0.12 & 0.29  \\ \hline
    \end{tabular} }
\caption{spin top}
\end{subtable}

\begin{subtable}{.49\linewidth}\centering {
\begin{tabular}{|l|l|l|l|l|}
    \hline
        method & $\gamma = 0.001$ & $\gamma = 0.01$ & $\gamma = 0.05$ \\ \hline
        Landweber & 0.28 & 0.30 & 0.38  \\ \hline
        CGLS-TV & 0.10 & 0.12 & 0.29  \\ \hline
    \end{tabular} }
\caption{layered bricks}
\end{subtable}
\caption{Least squares error, $\epsilon_r$, values corresponding to the spin top and layered bricks phantom reconstructions, for varying levels of added noise $\gamma$.}
\label{Tr1}
\end{table}

The least squares error scores, $\epsilon_r$, offered by CGLS-TV are less than those offered by Landweber over all values of $\gamma$ considered. The $\epsilon_r$ values offered by CGLS-TV are satisfactory for $\gamma\leq 0.01$, with $\epsilon_r \leq 12\%$ for $\gamma \leq 0.01$. We see a significant increase in error using CGLS-TV, up to $\epsilon_r = 29\%$, when $\gamma = 0.05$, on both image phantoms. The inverse of $\Rc_L$ is unstable, e.g., due to limited-angles as discussed in section \ref{visible}, and the system matrix, $A$, used here is underdetermined, which may help to explain the high sensitivity to noise and the increase in reconstruction error with $\gamma$. To implement the CGLS-TV and Landweber methods, $A$ was stored as a sparse matrix, which limits the size of $A$ due to memory concerns. In further work, we aim to calculate the coefficients of $A$ ``on the fly", e.g., as in \cite[section 5.4]{thompson2011source}, so that we can increase the image resolution and the number of data points without running into memory issues.

The reconstructions presented here serve as preliminary results to validate our microlocal theory, and to show that $f$ can be recovered from $\Rc_L f$ data in an idealized setting, with modest levels of added Gaussian noise. 

\section{Conclusions and further work}
In this paper, we introduced a novel scanning modality in 3-D CST, and new generalized Radon transforms, $\Rc$ and $\Rc_L$, which mathematically model the Compton scatter signal. We showed that $\Rc_L$ and $\Rc$, on domain $L^2_c(C_{\epsilon})$,  for some fixed $\epsilon \in (0,1)$, are injective and FIO which satisfy the Bolker condition. In section \ref{visible}, we investigated the edge detection capabilities of $\Rc$ and $\Rc_L$, and showed that there are elements of the edge map of $f$ which are invisible in $\Rc f$ and $\Rc_L f$ data, and thus the problem is one of limited-angle tomography. 
In section \ref{results}, we presented simulated image reconstructions using $\Rc_L f$ data. To validate our microlocal theory, we showed reconstructions of a delta function. The predicted and observed artifacts superimposed exactly, and were located outside the region of interest, $C_{\epsilon}$, as predicted by our theory. In subsection \ref{phantom_results}, we presented reconstructions of image phantoms in an idealized setting, using iterative methods. The data was simulated using the exact model $\Rc_L f$. We then added varying levels of Gaussian noise to simulate noise. In further work, we aim to consider more accurate data generation methods, such as Monte Carlo simulation. 

The proposed cylindrical scanner has the ability measure conventional X-ray CT data (straight line integrals), in addition to Compton scatter data. In further work, we aim to combine X-ray CT and Compton scatter data to address noise and inversion instabilities due to limited angles, e.g., as discussed in section \ref{visible}.

\section*{Acknowledgements} The author wishes to acknowledge
funding support from The V
Foundation, The Brigham Ovarian Cancer Research Fund, Abcam Inc., and Aspira Women's Health. The author would also like to thank the Isaac Newton Institute for Mathematical Sciences for support and hospitality during the workshop Rich and Nonlinear Tomography while this paper was being formulated. This work was supported by EPSRC grant number EP/R014604/1.

\bibliographystyle{abbrv} 
\bibliography{RefEllipsoid_D1}

\end{document}